\newtheorem{thm}{Theorem}[section]
\newtheorem{prop}[thm]{Proposition}
\newtheorem{lem}[thm]{Lemma}
\newtheorem{cor}[thm]{Corollary}
\theoremstyle{definition}
\newtheorem{defn}[thm]{Definition}
\theoremstyle{remark}
\newtheorem{remk}[thm]{Remark}
\newtheorem{remks}[thm]{Remarks}
\newtheorem{exm}[thm]{Example}
\newtheorem{exms}[thm]{Examples}
\newtheorem{notat}[thm]{\bf{Notation}}
\numberwithin{equation}{section}
\newcommand{\thmref}{Theorem~\ref}
\newcommand{\propref}{Proposition~\ref}
\newcommand{\corref}{Corollary~\ref}
\newcommand{\lemref}{Lemma~\ref}
\newcommand{\remref}{Remark~\ref}
\newcommand{\notbref}{Notation~\ref}
\newcommand{\sI}{{\mathcal I}}
\newcommand{\sO}{{\mathcal O}}
\newcommand{\sP}{{\mathcal P}}
\newcommand{\sQ}{{\mathcal Q}}
\newcommand{\sU}{{\mathcal U}}
\newcommand{\sV}{{\mathcal V}}
\newcommand{\sX}{{\mathcal X}}
\newcommand{\sY}{{\mathcal Y}}
\newcommand{\F}{{\mathbb F}}
\renewcommand{\P}{{\mathbb P}}
\newcommand{\Q}{{\mathbb Q}}
\newcommand{\Z}{{\mathbb Z}}
\newcommand{\fm}{{\mathfrak m}}
\newcommand{\surj}{\twoheadrightarrow}
\newcommand{\inj}{\hookrightarrow}
\newcommand{\red}{{\rm red}}
\newcommand{\Spec}{{\rm Spec \,}}
\newcommand{\sing}{{\rm sing}}
\newcommand{\Sch}{{\operatorname{\mathbf{Sch}}}}
\renewcommand{\max}{{\operatorname{\rm max}}}
\newcommand{\ds}{{/\kern-3pt/}}
\newcommand{\sm}{{\operatorname{sm}}}
\newcommand{\Proj}{{\operatorname{Proj}}}
\newcommand{\un}{\underline}
\newcommand{\ov}{\overline}
\renewcommand{\dim}{\text{\rm dim}}
\newcommand{\tuborg}{\left\{\begin{array}{ll}}
\newcommand{\sluttuborg}{\end{array}\right.}
\newcommand{\edim}{{\rm edim}}
\newcommand{\tdim}{{\rm tdim}}
\newcommand{\homg}{{\rm homog}}
\newcommand{\irr}{{\rm Irr}}
\newcommand{\reg}{{\rm reg}}
\newcommand{\wt}{\widetilde}
\newcommand{\nor}{{\rm nor}}
\newcounter{elno}
\newcounter{elno-abc}   
\newenvironment{listabc}{
                         \begin{list}{\alph{elno-abc})
                                     }{\usecounter{elno-abc}}
                      }{
                         \end{list}}
\newcounter{elno-abc-prime}
\begin{document}
\title{Bertini theorems revisited}
\author{Mainak Ghosh, Amalendu Krishna}
\address{Department of Mathematical Sciences, Indian Institute of Science
Education and Research, Mohali, 140306, India.}
\email{mainak.09.13@gmail.com}
\address{Department of Mathematics, Indian Institute of Science,  
Bangalore, 560012, India.}
\email{amalenduk@iisc.ac.in}


\keywords{Bertini theorems, Schemes over arbitrary fields and dvr}        

\subjclass[2010]{Primary 14J17; Secondary 14C25, 14G15}

\maketitle

\begin{quote}\emph{Abstract.}  
We prove several new Bertini theorems over arbitrary fields and
discrete valuation rings.
\end{quote}
\setcounter{tocdepth}{1}
\maketitle
\tableofcontents  


\section{Introduction}\label{sec:Intro}
\subsection{Background}
Let $\sP$ be a property of schemes
(e.g., regular, smooth, normal, weakly normal, semi-normal,
strongly F-regular, F-pure, integral, reduced and so on).
In the present language of algebraic geometry, a Bertini
theorem for $\sP$ (sometimes called {\sl the Bertini-$\sP$ theorem})
broadly says that if a subscheme of a projective space over a base scheme
satisfies the property $\sP$, then `almost all' hypersurface
sections of the subscheme inside the projective space also satisfy $\sP$.
The Bertini theorems are known to be very powerful tools in the study
of algebraic varieties.  
They play a very important role in reducing a problem about higher
dimensional varieties to curves and surfaces.

The Bertini theorem for smoothness over infinite fields is a classical
result. This problem over finite fields was solved by Gabber \cite{Gabber}
in a limited form (in particular, for hypersurfaces of degrees large enough but
divisible by the characteristic of the base field), and
Poonen \cite{Poonen-1} in general form
(a vastly improved probabilistic result for hypersurfaces of all large degrees).
Using his `geometric closed point sieve method', Poonen \cite{Poonen-2} also proved 
Bertini-smoothness theorems over finite fields for hypersurfaces containing a
given closed subscheme satisfying certain conditions.
This result of Poonen was later generalized by Gunther \cite{Gunther}
and Wutz \cite{Wutz}. The analogous results over
infinite fields were earlier obtained by Bloch \cite{Bloch} and 
Altman-Kleiman \cite{KA}.
In another work, Charles-Poonen \cite{Poonen-3} used Poonen's
techniques to prove a Bertini-irreducibility theorem over finite fields.
There have been other generalizations and applications of Poonen's Bertini theorems
in the recent past.

However, apart from smoothness and irreducibility,
Bertini theorems are still mostly unknown for other properties of schemes
one frequently encounters in algebraic geometry, especially over base rings which are
not perfect fields.
The objective of this paper is to prove some of these Bertini 
theorems over fields and discrete valuation rings.

\subsection{Main results}\label{sec:Results}
Our main results are the Bertini theorems for 
regularity, normality, reducedness, irreducibility and integrality
over (possibly imperfect) fields and discrete valuation rings.
Furthermore, we prove these results in the generality in which
the hypersurfaces are required to contain a prescribed 
closed subscheme (satisfying some necessary conditions) of the ambient 
projective space.
As consequences of this flexibility, we obtain generalizations of the 
Bertini irreducibility 
theorem over finite fields by Charles-Poonen and the Bertini-normality theorem over
infinite fields by Seidenberg.

Before we describe the main results, we discuss some concrete motivations
for proving them. Although the Bertini theorems of different kinds are
known to be powerful tools in algebraic geometry and will remain so in the
future, our interest in proving them was mainly driven by some precise (and some potential)
applications in the theory of algebraic cycles. Typically, these applications are
in finding a hypersurface section of a higher dimensional algebraic variety
which contains a given algebraic cycle. This allows one
to reduce the given problem to the case of algebraic cycles on lower dimensional varieties.

If the base field $k$ is not perfect and a quasi-projective $k$-scheme is regular but
not smooth over $k$, then the Bertini smoothness theorem for hypersurfaces
containing a closed subscheme is not helpful in solving the
above problem. Over such a field, we also often need Bertini theorems for
other properties such as reducedness and integrality.
For instance, the Bertini theorems of this paper (for regularity, reducedness and
integrality) over imperfect fields
are crucially used in the proof of \cite[Lemma~3.2, p.~12]{GKR-1}.

The Bertini theorems for normal crossing and singular schemes (for hypersurfaces
containing a closed subscheme and avoiding another closed subscheme) over imperfect fields
are key steps in the proofs of \cite[Proposition~7.7, Theorem~8.3]{BK-1}
(see p.~317 and p.~321 of op. cit.).
For another application of the Bertini theorem for normal crossing schemes,
we refer to the proof of \cite[Lemma~3.3]{BK-TAMS}. It has also been realized that
the Bertini regularity theorem for hypersurfaces containing a 0-cycle is a key
requirement in the potential generalizations of
the main results of \cite{BK-TAMS} to regular schemes over imperfect base fields.
For applications of the Bertini theorems for normality and $(R_a + S_b)$-property,
we refer the reader to the proofs of Theorem~1.2 (p.~33) and Theorem~8.6 (p.~46) of
\cite{Ghosh-Krishna}.

In the study of class field theory for varieties over a local field $k$, one has to usually
work with a model of a given $k$-variety over the ring of integers of $k$.
Such a model is not guaranteed to be smooth even if the given variety over $k$ is.
In such cases, one needs very general Bertini theorems for quasi-projective schemes
over discrete valuation rings. When the model is smooth, a Bertini theorem
of this kind has already played a key role in the proof of \cite[Lemma~13.2]{GKR}. Our hope is
that the general Bertini theorem of this paper will be very useful in the study of
class field theory of regular varieties over local fields of positive characteristics.
For more applications of the results of this paper, the reader can also see
\cite{BKS} and \cite{GK-1}.

\vskip .3cm

We now summarize our main results and refer to various sections for
precise statements.

\subsubsection{Bertini for regularity}
Our first set of results consists of Bertini theorems for regularity for hypersurface sections containing a prescribed closed subscheme over arbitrary infinite fields (see \propref{prop:B-regularity-*}). Note that this is stronger than the Bertini smoothness theorem when the field under consideration is imperfect.

One consequence of the new Bertini-regularity theorems is that they
allow a direct extension of the `strong Bertini theorems' of Diaz-Harbater
\cite{DH} to arbitrary infinite base fields. Interested readers can check
\cite[Theorem~4.1]{GK-arxiv} for a proof of this. We omit this part from this paper
for brevity.

\subsubsection{Bertini for normality}
The Bertini problem for normality was raised 
for the first time in a joint paper of H. Muhly and O. Zariski
\cite{MZ} as part of their attempt to prove resolution of
singularities. 
The question of Muhly-Zariski over infinite fields was answered by Seidenberg
\cite{Seidenberg}. However, this question does not yet have an
answer over finite fields. 
We provide an answer to the question of Muhly-Zariski over finite fields.
In our Bertini-normality theorem, the hypersurface sections are moreover required to contain a prescribed closed subscheme with some condition.
In particular, we obtain a stronger version of Seidenberg's Bertini theorem
over infinite fields. See Theorems~\ref{thm:Bertini-reg-0},
~\ref{thm:B-smooth-inf} and \corref{cor:B-reduced-fin}.

\subsubsection{Bertini for reducedness and integrality}
The Bertini theorems for geometrically reduced and geometrically
integral schemes are known over infinite fields (e.g.,
see \cite[Theorem~1]{KA} and \cite[Th{\'e}or{\'e}me~6.3]{Jou}).
A version of Bertini theorem for the geometric integrality 
for a family of projective schemes
over arbitrary fields is due to Benoist \cite{Ben}.
However, such results are not available today if the given variety is only
reduced (or integral), especially if we ask our hypersurfaces
to contain a prescribed closed subscheme.  
Our results resolve these problems.
See Theorems~\ref{thm:Bertini-reg-0},
~\ref{thm:B-smooth-inf}, ~\ref{thm:B-int-fin} and \corref{cor:B-reduced-fin}.

\subsubsection{Bertini for schemes over a dvr}
At present, we do not have many Bertini type theorems for properties like regularity,
smoothness, reducedness and integrality for hypersurface sections of 
quasi-projective schemes over a discrete valuation ring.
As noted above, such theorems are very useful in the study of
class field theory and algebraic cycles on varieties over higher local fields.

The Bertini-regularity theorem for schemes which are regular, flat and projective over 
a discrete valuation ring with normal crossing special fiber was proven by 
Jannsen-Saito \cite{JS} and Saito-Sato \cite{SS}. The normal crossing special fiber
condition was recently removed by Binda-Krishna \cite{BK} under the condition that the
residue field is infinite and perfect.
A form of Bertini-normality theorem for affine and flat normal schemes
over a discrete valuation ring was obtained by Horiuchi-Shimomoto
\cite{HS} under some conditions on the ring.
To our knowledge, apart from the above results, 
no other Bertini type result seems to be known for 
schemes over a discrete valuation ring. 
In this paper, we establish many of these results
(see Theorems~\ref{thm:B-base-reg} and ~\ref{thm:B-Bertini-red}). In particular,
we generalize the
results of Jannsen-Saito, Saito-Sato and Binda-Krishna to
arbitrary quasi-projective schemes.

It would be interesting to obtain such Bertini theorems for 
quasi-projective schemes over more general bases than the
spectrum of a discrete valuation ring. Some results of this nature and their
applications have been obtained in the past by Gabber-Liu-Lorenzini
(see \cite[\S~2]{GLL-1} and \cite[\S~3.]{GLL}).

\subsection{Organization of the paper}\label{sec:Outline}
We prove some Bertini-regularity theorems over infinite (in particular, imperfect)
fields in \S~\ref{sec:Prelim}. The Bertini theorems for other properties over
infinite fields are shown in \S~\ref{sec:Inf**}.
In \S~\ref{sec:BNT*}, we prove the
Bertini theorems for normality and reducedness over finite fields.
In this section, we generalize the Bertini theorems of Poonen
\cite{Poonen-1}
and Wutz \cite{Wutz} over finite fields and give new applications.
These are used in the proofs of Bertini theorems over a discrete valuation ring.
In \S~\ref{sec:Irr-2}, we prove the Bertini-irreducibility and
integrality theorems over finite fields.
In \S~\ref{sec:dvr*}, we prove Bertini theorems for quasi-projective
schemes over a discrete valuation ring. These are obtained by suitably
combining the Bertini theorems over
the quotient field and the residue field of the discrete valuation ring.

\begin{notat}\label{notat:Notn}
Given a field $k$, we shall let $\Sch_k$ denote the
category of finite type separated schemes over $k$. 
If $k \subset k'$ is a field extension,
we shall let $X_{k'} = X \times \Spec(k')$.
For a Noetherian scheme $X$, we shall let $\irr (X)$ denote the
set of all irreducible components of $X$.
We shall let $X_\red$ denote the closed subscheme of $X$ defined
by the ideal sheaf of nilradicals of $\sO_{X}$.
An intersection of two or more subschemes of $X$ will mean a
scheme theoretic intersection unless we mention otherwise.
For a scheme $X$ over $k$ and a field extension $k \subset k'$, we shall let
$X(k')$ denote the set of morphisms $\Spec(k') \to X$ in the category
of all schemes over $k$.

By a subscheme of $\P^n_k = \Proj(k[x_0, \ldots , x_n])$, we shall mean a $k$-scheme $X$
with a locally closed embedding $X \subset \P^n_k$.  For such a subscheme, we shall let
$\ov{X}$ denote the scheme-theoretic closure of $X$ in $\P^n_k$.
If $X \subset \P^n_k$ is a closed subscheme, the notation $\sI_X$ will mean the
sheaf of ideals on $\P^n_k$ which defines $X$.
If $H_f \subset \P^n_k = \Proj(k[x_0, \ldots , x_n])$ is the hypersurface
defined by a homogeneous polynomial $f$, and if $k \subset k'$ is a field
extension, we shall usually denote $(H_f)_{k'}$ simply by $H_f$ whenever
$k'$ is given.
For $X \subset \P^n_{k'}$ (or a subset of $\P^n_k$)
and $f \in k[x_0, \ldots , x_n]$ homogeneous,
we shall write $X \cap (H_f)_{k'}$ (or $X \cap H_f$) as $X_f$.
\end{notat}


\section{Bertini for regularity over infinite fields}\label{sec:Prelim}
Recall that for a Noetherian scheme $X$, the subset of points of $X$ 
whose local rings are regular is denoted by $X_\reg$.
If $X$ is excellent (e.g., objects of $\Sch_k$ where $k$ is a field), 
$X_\reg$ has the canonical structure of an open
subscheme of $X$ which is dense if $X$ is generically reduced.
If $X \in \Sch_k$ for a field $k$, we let $X_\sm \subset X$ denote the smooth
locus of $X$. This is the largest open
subscheme of $X$ at each of whose points $X$ is smooth over $k$.
Recall that $X_\sm \subset X_\reg$ and the equality holds if $k$ is perfect.
We let $X_\sing$ be the complement of $X_\reg$ with the reduced closed subscheme
structure. We shall use the following notation and definition throughout this paper.

\begin{notat}\label{notat:Delta-X}
For a Noetherian scheme $X$, we let $\Delta(X)$ be the subset of $X$ such that
$x \in \Delta(X)$ if and only if $x$ is either a generic point of $X$
or an associated point of $X$ or a generic point of $X_\sing$.
Note that the associated and generic points of $X$ coincide if it is reduced
(e.g., see \cite[Lemma~3.3]{GuK}).
\end{notat}


\begin{defn}\label{defn:Gen}
  Let $\sP$ be a property of schemes.
  If $k$ is an infinite field and $X, Z \subset \P^n_k$ are subschemes such that $Z$ is
  closed, then we shall say that $X \cap H$ satisfies $\sP$ for  
a general hypersurface $H \subset \P^n_k$ containing $Z$  
if for all $d \gg 0$, there is a nonempty open 
subscheme $\sU$ of the linear system $|H^0(\P^n_k, \sI_Z(d))|$
such that $X \cap H$ satisfies $\sP$ for all $H \in \sU(k)$.
\end{defn}

\vskip .3cm

\begin{notat}\label{notat:Hyp-1}
For the remainder of \S~\ref{sec:Prelim},
we fix an infinite field $k$, a subscheme $X \subset \P^n_k$ and a closed
subscheme $Z \subset \P^n_k$.
\end{notat}

Our goal is to prove a Bertini-regularity theorem for hypersurface sections
of $X$ containing $Z \cap X$.
We begin with the following special case which is
\cite[Corollary~3.4.14]{Flenner} when $\ov{X}$ is regular.
The general version is derived from a result of Seidenberg \cite{Seidenberg}
as follows.

\begin{lem}\label{lem:B-regularity}
  Let $d \ge 1$ be any integer. Then a general hypersurface section of $X$ of
degree $d$ is regular along $X_\reg$. 
\end{lem}
\begin{proof}
We can assume $X_\reg \neq \emptyset$. 
We can replace $X$ by $X_\reg$ which allows us to assume that $X$ is
regular. We can replace $\P^n_k$ by its $d$-uple Veronese embedding
which allows us to assume that $d =1$.
Since $X$ is reduced, the scheme-theoretic closure $\ov{X}$ is the
topological closure of  
$X$ with its reduced induced closed subscheme structure.
 
By \cite[Theorem~1]{Seidenberg}, we can find a dense open subscheme
$\sU$ of the linear system $|H^0(\P^n_k, \sO_{\P^n_k}(1))|$
such that for every $H \in \sU(k)$, the hypersurface section
$\ov{X} \cap H$ is regular at every point of $\ov{X}_\reg \cap H$.
We remark that Seidenberg assumes in the statement of his theorem that
$\ov{X}$ is irreducible, but never uses it in his proof.
He uses this extra condition only in the latter sections of his paper for a
different purpose.  
Since $X$ is open in $\ov{X}$, it follows that
$X \cap H$ is regular at every point of $X_\reg \cap H$.
\end{proof}

\begin{lem}\label{lem:Elem*}
Suppose in the situation of \notbref{notat:Hyp-1} that for all $d \gg 0$, 
there exists a dense open subscheme $\sU \subset 
|H^0(\P^n_k, \sI_{Z\cap \ov{X}}(d))|$ such that
$H \cap X$ satisfies a property $\sP$ for every $H\in \sU(k)$. Then
for all $d \gg 0$, there exists
a dense open subscheme $\sV \subset |H^0(\P^n_k, \sI_Z(d))|$
such that $H \cap X$ satisfies $\sP$ for every $H \in \sV(k)$.
\end{lem}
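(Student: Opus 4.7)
The plan is to take $\sV := \sV' = \phi_2^{-1}(\phi_3(\sU))$ exactly as constructed in the paragraph preceding the lemma statement. Essentially all the work has already been done in the discussion above; the proof proposal is to organize those observations into a formal argument and verify that each step really goes through.

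First, I would verify that $\sV$ is dense open in $|H^0(\P^n_k,\sI_Z(d))|$ for $d \gg 0$. By hypothesis, $\sU$ is dense open in $|H^0(\P^n_k,\sI_{Z \cap \ov X}(d))|$. Intersecting $\sU$ with the locus where $\phi_3$ is defined (that is, where the hypersurface does not contain $\ov X$) preserves density, and $\phi_3$ restricted to this locus is a smooth surjective morphism onto $|H^0(\ov X,\sI_{Z \cap \ov X}/\sI_{\ov X}(d))|$, so $\phi_3(\sU)$ is dense open in the target. The same smoothness argument for $\phi_2$ then shows $\sV' = \phi_2^{-1}(\phi_3(\sU))$ is dense open in $|H^0(\P^n_k,\sI_Z(d))|$.

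Next, I need to verify that every $H \in \sV(k)$ satisfies the conclusion. Since $\phi_1$ is a closed immersion (a hypersurface containing $Z$ automatically contains $Z \cap \ov X$), $\phi_1(H) \in \sU'(k)$ where $\sU' := \phi_3^{-1}(\phi_3(\sU))$. Set $y := \phi_3(\phi_1(H)) \in \phi_3(\sU)(k)$. The main point to check is then: every $H' \in \sU'(k)$ has $H' \cap X$ satisfying $\sP$. For this, consider the fiber $\phi_3^{-1}(y)$, which is an affine space $\A^r_k$ since $\phi_3$ is a linear projection of projective spaces arising from the surjection $\sI_{Z \cap \ov X}(d) \twoheadrightarrow \sI_{Z \cap \ov X}/\sI_{\ov X}(d)$. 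Because $k$ is infinite, the dense open subset $\sU \cap \phi_3^{-1}(y)$ of $\A^r_k$ has a $k$-rational point $H_0$. Since $\phi_3(H') = y = \phi_3(H_0)$, the hypersurfaces $H'$ and $H_0$ cut out the same divisor on $\ov X$, hence $H' \cap \ov X = H_0 \cap \ov X$ as closed subschemes of $\ov X$. By the hypothesis on $\sU$, $H_0 \cap X$ satisfies $\sP$, so the same holds for $H' \cap X = (H' \cap \ov X) \cap X$.

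The only mildly delicate point will be making sure the fiber $\phi_3^{-1}(y)$ really is an affine space (rather than just something birational to one) and that $\sU \cap \phi_3^{-1}(y)$ is nonempty open: the first follows from the linearity of $\phi_3$ together with the fact that the kernel of $H^0(\P^n_k,\sI_{Z \cap \ov X}(d)) \to H^0(\ov X, \sI_{Z \cap \ov X}/\sI_{\ov X}(d))$ is $H^0(\P^n_k,\sI_{\ov X}(d))$, and the second from restricting the dense open $\sU$ to the fiber and invoking $k$-infiniteness (with the degenerate case $r = 0$ already flagged in the preceding paragraph). Applying this with $H' = \phi_1(H)$ gives $H \cap X = \phi_1(H) \cap X$ satisfies $\sP$, completing the proof.
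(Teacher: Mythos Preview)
Your proposal is correct and follows precisely the argument the paper gives in the paragraph immediately preceding the lemma: take $\sV = \sV' = \phi_2^{-1}(\phi_3(\sU))$, use smoothness and surjectivity of $\phi_2,\phi_3$ to see it is dense open, and for $H \in \sV(k)$ use the affine-space fiber $\phi_3^{-1}(y)$ together with infiniteness of $k$ to produce $H_0 \in \sU(k)$ with $H_0 \cap \ov X = H \cap \ov X$. Your write-up is in fact slightly more careful than the paper's in spelling out why the fiber is an affine space and why $\sU \cap \phi_3^{-1}(y)$ is nonempty, but the approach is identical.
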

\begin{proof}
For every $d \ge 0$, we have  the canonical maps of coherent sheaves
\[
\sI_{Z}(d) \inj \sI_{Z \cap \ov{X}}(d) \surj {\sI_{Z \cap \ov{X}}}/{\sI_{\ov{X}}}(d)
\]
such that the composite map is surjective.
We let $d \gg 0$ be such that the induced map
$H^0(\P^n_k, \sI_Z(d)) \to H^0(\ov{X}, {\sI_{Z \cap \ov{X}}}/{\sI_{\ov{X}}}(d))$
is surjective.
We then get a commutative diagram of rational maps between
the linear systems
\begin{equation}\label{eqn:Inf-closed-0}
\xymatrix@C.8pc{
|H^0(\P^n_k, \sI_Z(d))| \ar@{^{(}->}[rr]^-{\phi_1} \ar@{.>}[dr]_-{\phi_2}
& & |H^0(\P^n_k, \sI_{Z\cap \ov{X}}(d))| \ar@{.>}[dl]^-{\phi_3} \\
& |H^0(\ov{X}, {\sI_{Z \cap \ov{X}}}/{\sI_{\ov{X}}}(d))|,}
\end{equation}
where $\phi_2$ and $\phi_3$ are rational maps which are projections with
respect to some centers. They define smooth surjective
morphisms when restricted to the domains of definitions which are open dense subsets consisting of hypersurfaces not containing ${X}$. 

We now choose a dense open $\sU_1$ contained in the intersection of $\sU$ with the domain of definition of $\phi_3$ and let $\sU' = \phi^{-1}_{3}(\phi_3(\sU_1))$. Then $\sU'$ is open in $|H^0(\P^n_k, \sI_{Z\cap \ov{X}}(d))|$. It suffices to show that $\phi^{-1}_1(\sU')$ is nonempty.
But this clear because
$\phi^{-1}_{2}(\phi_3(\sU_1))$ is nonempty and is contained in $\phi^{-1}_1(\sU')$.
\end{proof}

The following is the main result of this section.

\begin{prop}\label{prop:B-regularity-*}
  In the situation of \notbref{notat:Hyp-1}, 
assume further that $Z \cap \ov{X}$ is a reduced finite subscheme contained in $X_\reg$.
Then a general hypersurface $H \subset \P^n_k$ containing $Z$ has the property that
$X \cap H$ is regular along $X_\reg$.
\end{prop}
\begin{proof}
  As in \lemref{lem:B-regularity}, we can assume that $X \neq \emptyset$ is regular. Using our assumptions and \lemref{lem:Elem*}, we can assume that $Z \subset X_\reg$. The proposition is trivial if $\dim(X) = 0$. We shall therefore assume that $\dim(X) \ge 1$. Since $X$ is integral, $\ov{X}$ is the topological closure of $X$ with reduced closed subscheme structure. We
  let $V = \ov{X} \setminus Z$. We shall assume that $d$ is large enough so that $\sI_Z(d)$ is globally generated. 

Using \lemref{lem:B-regularity}, we can find for all $d \gg 0$, a dense open 
subscheme $\sU$ of the linear system $|H^0(\P^n_k, \sI_Z(d))|$
such that for every $H \in \sU(k)$, the hypersurface section
$X \cap H$ is regular along $V$.
We refer to the proof of \cite[Theorem~1]{KA} to see how this is achieved.
It remains to show that for every $x \in Z$ (note that $Z$ is finite)
and $d \gg 0$, there is a dense open subscheme $\sU_x \subset
|H^0(\P^n_k, \sI_Z(d))|$ such that for every $H \in \sU_x(k)$, the 
hypersurface section $X \cap H$ is regular at $x$. 

We write $Z = \{x\} \cup Z'$ with $x \notin Z'$ so that $\sI_Z = \sI_{\{x\}}\sI_{Z'}$.
This implies that 
\[
\frac{\sI_{\{x\}}}{\sI^2_{\{x\}} + \sI_Z} = 
\frac{\sI_{\{x\}}}{\sI_{\{x\}}(\sI_{\{x\}}+ \sI_{Z'})}
= \frac{\sI_{\{x\}}}{\sI_{\{x\}} \sO_{\P^n_k}} = 0,
\]
where the second equality occurs because $\{x\}$ and $Z'$ are disjoint.
On the other hand, the map
$H^0(\P^n_k, \sI_Z(d)) \otimes_k \sO_{\P^n_k} \to \sI_Z(d)$ is surjective
for $d \gg 0$. Hence the map of sheaves
\begin{equation}\label{eqn:B-regularity-*-1}
H^0(\P^n_k, \sI_Z(d)) \otimes_k \sO_{\{x\}} \to \sI_Z(d) 
\otimes_{\sO_{\P^n_k}} \sO_{\{x\}}; \ \ 
(s \otimes a \mapsto as)
\end{equation}
is surjective for $d \gg 0$.
We conclude that the canonical map  $H^0(\P^n_k, \sI_Z(d)) \otimes_k  \sO_{\{x\}} \to 
{\sI_{\{x\}}}/{\sI^2_{\{x\}}}(d)$ of sheaves is surjective for $d \gg 0$.

Let $\fm_x$ denote the maximal ideal of the local ring $\sO_{X,x}$. Since $Z$ is reduced, we have a surjection ${\sI_{\{x\}}}/{\sI^2_{\{x\}}} \surj {\fm_x}/{\fm^2_x}$. Since ${\sI_{\{x\}}}/{\sI^2_{\{x\}}}(d) \cong {\sI_{\{x\}}}/{\sI^2_{\{x\}}}$ and ${\fm_x}/{\fm^2_x} (d) \cong {\fm_x}/{\fm^2_x}$, it follows from the above that the canonical map of $k(x)$-vector spaces
\begin{equation}\label{eqn:B-regularity-*-2}
H^0(\P^n_k, \sI_Z(d)) \otimes_k k(x) \to {\fm_x}/{\fm^2_x}
\end{equation}
is surjective for $d \gg 0$.
But this implies from ~\eqref{eqn:B-regularity-*-1} that its restriction
$H^0(\P^n_k, \sI_Z(d))  \to {\fm_x}/{\fm^2_x}$ can not be zero.
Note that $\dim_{k(x)} {\fm_x}/{\fm^2_x} =  \dim(X) \ge 1$.
Let us call this restriction map $\psi_x$.

If we  choose any element $f \in H^0(\P^n_k, \sI_Z(d))$,
then $f$ will not die in ${\fm_x}/{\fm^2_x}$ under $\psi_x$ if and only if
the hypersurface $H_f$ contains $Z$ and $X \cap H_f$ is regular at $x$.
This uses our assumption that $x \in X_\reg$.
Since $|{\rm Ker}(\psi_x)|$ is a proper closed subscheme of
$|H^0(\P^n_k, \sI_Z(d))|$, 
it follows that there is a dense open subscheme
$\sU_x \subset |H^0(\P^n_k, \sI_Z(d))|$ for $d \gg 0$ such that
every $H \in \sU_x(k)$ contains $Z$ and $X \cap H$ is regular at $x$.
This finishes the proof.
\end{proof}

\begin{cor}\label{prop:B-regularity-spl}
  In the situation of \notbref{notat:Hyp-1}, assume further that $Z$ is a reduced finite
  closed subscheme contained in $X_\reg$. Then a general hypersurface $H \subset \P^n_k$ 
containing $Z$ has the property that $X \cap H$ is regular along $X_\reg$.
\end{cor}
\begin{proof}
  Under the assumption of the corollary, $Z$ is closed in $\ov{X}$ and hence
  \propref{prop:B-regularity-*} applies.
\end{proof}

\begin{remk}\label{remk:Failure}
  It is easy to see that the condition that $Z \cap \ov{X}$
  is reduced can not be relaxed in the above two results.
\end{remk}

We end this section with the following result, to be used in
\S~\ref{sec:Inf**}.

\begin{lem}\label{lem:good}
  In the situation of \notbref{notat:Hyp-1}, assume further that $T \subset \P^n_k$ is a
  finite set of (not necessarily closed) points containing $\Delta(X)$ such that
  $T \cap Z = \emptyset$. Then a general hypersurface $H \subset \P^n_k$ containing $Z$
  satisfies the following.
\begin{enumerate}
\item
$T \cap H = \emptyset$.
\item
$X \cap H$ is an effective Cartier divisor on $X$.
\end{enumerate}
\end{lem}
\begin{proof}
  We can assume $\dim(X) \ge 1$ because there is nothing to prove otherwise.
  We only need to prove (1) as (2) is its easy consequence.
  Since (1) is clearly an open condition on the points of $|H^0(\P^n_k, \sI_{Z}(d))|$,
  we only need to show that this linear system is nonempty for all $d \gg 0$.
  But this follows easily from \cite[Lemma~4.6]{GLL}.
\end{proof}

\section{Bertini for other properties over
infinite fields}\label{sec:Inf**}
In this section, we shall prove our remaining Bertini theorems over infinite fields.
The key step in the proof is the following algebraic lemma.

\subsection{The key lemma}\label{sec:Key}
Let $a, b \ge 0$ be two integers. Recall that a Noetherian scheme $X$ is called an
 $S_b$-scheme if for all points $x \in X$, one has
 ${\rm depth}(\sO_{X,x}) \ge {\rm min}(b, \dim(\sO_{X,x}))$
 (see \cite[pp.~183]{Matsumura}). 
We shall say that $X$ is an $(R_a+S_{b})$-scheme
if it is regular in codimension $a$ and is an $S_b$-scheme.
For integers $a, b \ge 0$, we let
\[
  X_{S_b} = \{x \in X|\sO_{X,x} \ \mbox{is \ an} \ S_b\mbox{-ring}\} \ \ \mbox{and}
\]
\[
X_{R_a + S_b} = \{x \in X|\sO_{X,x} \ \mbox{is \ an} \ (R_a + S_b)\mbox{-ring}\}.
\]

It follows from Auslander's theorem (see \cite[Proposition~6.11.2]{EGA-IV})
and openness of regular loci that $X_{S_b}$ and $X_{R_a + S_b}$ have
canonical structure of open subschemes of $X$.
We let $X_\nor \subset X$ be the set of points $x \in X$ such that
$\sO_{X,x}$ is integrally closed. Note that $X_\nor=X_{R_1 + S_2}$.
In particular, $X_\nor$ has the structure of an open subscheme of $X$. 
For $r \ge 0$, we let
\begin{equation}\label{eqn:SR-locus}
\Sigma^r_{X} = \left\{x \in X | \ \dim(\sO_{X,x}) \ge r+1 \ \mbox{and} \ 
  {\rm depth}(\sO_{X,x}) = r\right\}.
\end{equation}

We shall say that a scheme $X$ is
locally embeddable in a regular scheme if every point of $X$
has an affine neighborhood which is a closed subscheme of a 
Noetherian regular scheme (see \cite[Proposition~5.11.1]{EGA-IV}).

The key lemma for proving the Bertini theorems for $(R_a + S_b)$-schemes
over any field is the following result of independent
interest.

\begin{lem}\label{lem:Key*}
Let $X$ be a Noetherian separated scheme
of pure dimension $d \ge 0$. Assume that $X$ is locally embeddable in a regular
scheme. Assume also that $X$ is an $S_r$-scheme for some integer $r \ge 0$.
Then $\Sigma^r_{X}$ is a finite set.
\end{lem}
\begin{proof}
For any integer $0 \le m \le d$, let 
\[
W_m = \{x \in X|\ {\rm codepth}(\sO_{X,x}):= \dim(\sO_{X,x}) -
{\rm depth}(\sO_{X,x}) \ge m\}.
\]
Since $X$ is locally embeddable in a regular scheme, it follows from
Auslander's theorem (see \cite[Proposition~6.11.2]{EGA-IV}) that $W_m$
is closed in $X$. We let $W = \stackrel{d}{\underset{m = 1}\bigcup} W_m$.
Then $\Sigma^r_{X} \subset W$.

Suppose that $\Sigma^r_{X}$ is infinite. Then we can find $x_0 \in  \Sigma^r_{X}$
different from the generic points of $W_m$. Let $m_0 = {\rm codepth}(\sO_{X,x_0}) > 0$.
Let $x_1$ be the generic point of an irreducible component of $W_{m_0}$ passing
through $x_0$ and let $e = \dim(\sO_{X, x_0}) - \dim(\sO_{X,x_1}) > 0$.
As ${\rm codepth}(\sO_{X,x_1}) \ge m_0$, we get
\[
  {\rm depth}(\sO_{X,x_1}) \le {\rm depth}(\sO_{X,x_0}) -e = r-e < r.
\]
On the other hand, we have
\[
  \dim(\sO_{X,x_1}) = \dim(\sO_{X, x_0}) - e > r - e \ge {\rm depth}(\sO_{X,x_1}).
\]
Therefore, we get ${\rm depth}(\sO_{X,x_1}) < {\rm min}(r, \dim(\sO_{X,x_1}))$.
This contradicts our hypothesis that $X$ is an $S_r$-scheme.
\end{proof}

\begin{cor}\label{cor:Cat-exm}
  Let $X$ be a equidimensional Noetherian separated scheme.
Assume that $X$ is essentially of finite type over a Noetherian regular ring or over
  a complete Noetherian local ring. If $X$ is an $S_r$-scheme for some $r \ge 0$, then
  $\Sigma^r_X$ is a finite set.
\end{cor}

\vskip .3cm

\subsection{Bertini when $X_\reg$ is not smooth}
\label{sec:B-reg*}
Our set-up for this subsection is the following.
\begin{notat}\label{notat:BT-10}
We let $k$ be an infinite field and $X \subset \P^n_k$ a subscheme of pure dimension $m$.
For $r \ge 0$, let $\Sigma^r_X$ be as in ~\eqref{eqn:SR-locus}.
Let $T \subset \P^n_k$ be a finite (not necessarily closed) subset containing $\Delta(X)$
(cf. \notbref{notat:Delta-X}).
We let $Z \subset \P^n_k$ be a closed subscheme such that
$Z \cap T = \emptyset$ and
$Z \cap \ov{X}$ is a reduced finite subscheme contained in $X_\reg$.
\end{notat}
We shall now prove Bertini theorems for $X$ under the above set-up.
Note that as we do not assume $X_\reg$ is smooth over $k$, these Bertini theorems are
applicable even if $k$ is an imperfect field. For instance, such results will play
a key role in the generalization of \cite[Thm.~4.1]{JS} to regular (but not smooth)
varieties over local fields.


\begin{thm}\label{thm:Bertini-reg-0}
  In the situation of \notbref{notat:BT-10}, a general hypersurface $H \subset \P^n_k$
  containing $Z$ satisfies the following.
\begin{enumerate}
\item
  $T \cap H = \emptyset$. In particular, $X\cap H$ is an effective Cartier divisor on $X$.
\item
  $X_\reg \cap H$ is regular.
\item
  $X_\red \cap H$ is reduced.
\item
  $X_{\nor} \cap H$ is normal.
\item
  $X_{R_a+S_b} \cap H$ is an $(R_a + S_b)$-scheme.
\end{enumerate}
\end{thm}
\begin{proof}
The properties (3) and (4) are special cases of (5) since
  reducedness is equivalent to $(R_0 + S_1)$ and normality is equivalent to
  $(R_1 + S_2)$. The properties (1) and (2) follow directly from
  \propref{prop:B-regularity-*} and \lemref{lem:good}. Since all properties are open
  conditions on the parameter space of hypersurfaces of a fixed degree,
  it remains to prove (5).

  Since $X$ is of pure dimension $m$ and $X_{R_a+S_b} \subset X$ is open,
  it follows that $X_{R_a+S_b}$ is of pure dimension at most $m$.
  We can therefore assume that $X$ is an
  $(R_a + S_b)$-scheme of dimension $m \ge 1$.
  We can also assume by \lemref{lem:Elem*} that $Z \subset \ov{X}$.

Since $Z \subset X_\reg$, it follows that $Z \cap \Sigma^b_X = \emptyset$.
We now apply \propref{prop:B-regularity-*} and \lemref{lem:good}, where we replace
the set $T$ in the latter result by current $T \cup
\Sigma^b_X$. It follows that for all $d \gg 0$, there is a nonempty
open subscheme $\sU \subset |H^0(\P^n_k, \sI_{Z}(d))|$ such that 
every $H \in \sU(k)$ satisfies (2) and the property that
$X \cap H \cap \Sigma^b_X = \emptyset$.

We let $H \in \sU(k)$ and $Y = X \cap H$. We show that $Y$  is an
$(R_a + S_b)$-scheme. Since $X$ has pure dimension $m$ and
is regular in codimension $a$, it follows that $\dim(X_{\rm sing}) \le m-a-1$.
Since $Y$ does not contain any generic point of $X_\sing$,
it follows that $\dim(Y \cap X_{\rm sing}) \le m-a-2$.
Since $Y$ is catenary, it follows that
$(Y\cap X_{\rm sing})$ has codimension at least $(m-1) - (m-a-2) = a+1$ in 
$Y$. Since $Y \cap X_\reg$ is regular, 
we conclude that $Y$ is regular in codimension $a$.
We shall now show that $Y$ is an $S_b$-scheme.

We fix a point $y \in Y$. If $y \in X_{\reg}$, then
$\sO_{Y,y}$ is regular and hence an $S_b$-ring. 
We can therefore assume that $y \in X_{\rm sing}$.
Since $y \notin \Sigma^b_X$, 
we see that either $\dim(\sO_{X,y}) \le b$ or
${\rm depth}(\sO_{X,y}) \ge b+1$. 
If $\dim(\sO_{X,y}) \le b$, then we must have ${\rm depth}(\sO_{X,y})
= \dim(\sO_{X,y})$ since $X$ is an $S_b$-scheme. Equivalently,
$\sO_{X,y}$ is Cohen-Macaulay. 
Since $Y$ is an effective Cartier divisor on $X$, it follows that
$\sO_{Y,y}$ is also Cohen-Macaulay. 

If $\dim(\sO_{X,y}) \ge b+1$, then
we just saw that ${\rm depth}(\sO_{X,y}) \ge b+1$. In this case, it is an elementary
fact that ${\rm depth}(\sO_{Y,y}) \ge b$
(see \cite[Proposition~16.4.6(ii)]{EGA}).
We have therefore shown that $\sO_{Y,y}$ is either Cohen-Macaulay or
${\rm depth}(\sO_{Y,y}) \ge b$ for every
$y \in Y$. But this is equivalent to saying that
$Y$ is an $S_b$-scheme.
This finishes the proof.
\end{proof}

\subsection{Bertini for irreducibility and integrality}
\label{sec:Irr-inf}
We need the following additional input in order to prove Bertini theorems for
irreducibility and integrality.

\begin{lem}\label{lem:B-irr-inf}
 Let $k$ be an infinite field and $Y \subset \P^n_k$ an irreducible subscheme of
  dimension $m \ge 2$. Let $W \subset \P^n_k$ be a closed subscheme such that
the codimension of $W \cap Y$ is at least two in $Y$.
Then a general hypersurface $H \subset \P^n_k$ containing $W$
has the property that $Y \cap H$ is irreducible.
\end{lem}
\begin{proof}
Let $\ov{k}$ be an algebraic closure of $k$. We let $Y_1, \cdots , Y_r$ denote the irreducible components of $Y_{\ov{k}}$.  
It follows from \cite[Theorem~1]{KA} that for all $d \gg 0$, there is
a dense open subscheme $\sU_1 \subset |H^0(\P^n_{\ov{k}}, \sI_{W_{\ov{k}}}(d))|$
such that every $H \in \sU_1(\ov{k})$ has the property that
$H \cap Y_i$ is irreducible for each $i$.
We let $V = H^0(\P^n_k, \sI_{W}(d))$. 

Let $K/k$ be a finite extension over which $\sU_1$ and $Y_i$'s are defined.
Let $\pi \colon \P(V)_K \to \P(V)$ be the projection. Since $\pi$ is a finite
morphism, we see that $\pi(\P(V)_K \setminus \sU_1)$ is a proper closed subset of
$\P(V)$. Let $\sU$ be its complement in $\P(V)$. Then $\pi^{-1}(\sU) = \sU_K$ is
open dense in $\sU_1$ and is defined over $k$. Let $H \in \sU(k)$. As
$Y_K \to Y$ is finite and flat, its restriction to each $Y_i$ surjects onto $Y$.
In particular, $Y_i \cap H_K \to Y \cap H$ is surjective. This forces 
$Y \cap H$ to be irreducible.
\end{proof}


\begin{thm}\label{thm:B-irreducible-inf}
  In the situation of \notbref{notat:BT-10}, assume further that $X$ is irreducible of
  dimension $m \ge 2$. 
Then a general hypersurface $H \subset \P^n_k$ containing $Z$ satisfies the following.
\begin{enumerate}
\item
  $T \cap H = \emptyset$.
\item
$X_\reg \cap H$ is regular.
 \item
   $X \cap H$ is irreducible.
 \item
 $X_\red \cap H$ is integral.
\end{enumerate}
\end{thm}
\begin{proof}
Combine \thmref{thm:Bertini-reg-0} with \lemref{lem:B-irr-inf}.
\end{proof}

\subsection{Bertini when $X_\reg$ is smooth}\label{sec:B-sm*}
Let $k$ be any field and $Y$ a $k$-scheme. Recall that for a point $y \in Y$, the embedding
dimension of $Y$ at $y$ is $\edim_y(Y) =
\dim_{k(y)}(\Omega^1_{Y/k}\otimes _{\sO_Y} k(y))$,
where $\Omega^1_{Y/k}$ is the Zariski sheaf of K{\"a}hler differentials on $Y$.
For any integer $e \ge 0$, we let $Y_e$ denote the subscheme
of points $y \in Y$ such that ${\rm edim}_y(Y) = e$.
Since $y \mapsto {\rm edim}_y(Y)$ is an upper semi-continuous function 
on $Y$ (see \cite[Example~III.12.7.2]{Hartshorne}), 
it follows that $Y_e$ is a locally closed subscheme of $Y$. 
We let $\tdim(Y) = {\underset{e \ge 0}{\rm max}}
\{e + \dim(Y_e)\}$.

\begin{notat}\label{notat:BT-11}
  Let $k$ be an infinite field. Let $X \subset \P^n_k$ be a subscheme of
pure dimension $m$ and $Z \subset \P^n_k$ a closed subscheme.
Let $T \subset \P^n_k$ be a finite (not necessarily closed)
subset containing $\Delta(X)$ (cf. \notbref{notat:Delta-X}). Assume that $X_\reg$ is
smooth and $Z \subset \P^n_k$  satisfies the following.
\[
  (1) \ Z \cap T = \emptyset, \ (2) \ Z \cap X_\reg \ \mbox{is \ dense in} \
  Z \cap \ov{X} \ \ \mbox{and} \ \ (3) \ \tdim(Z \cap X_\reg) < \dim(X_\reg).
\]
Note that (2) and (3) are much weaker than the condition we imposed on $Z$ in
\S~\ref{sec:B-reg*}.
\end{notat}

Under (3), we have the following result from \cite[Theorem~7]{KA}.  

\begin{lem}\label{lem:KL-smooth}
 A general hypersurface $H \subset \P^n_k$ containing $Z$ has the property that $X_\reg \cap H$ is smooth.
\end{lem}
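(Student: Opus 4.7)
The plan is to reduce the assertion to a direct application of the Kleiman-Altman Bertini smoothness theorem \cite[Theorem~7]{KA}, which guarantees that for a smooth quasi-projective subscheme $Y \subset \P^n_k$ of pure dimension $m$ and any closed subscheme $Z \subset \P^n_k$ satisfying $\edim(Z \cap Y) < m$, a general hypersurface of degree $d \gg 0$ through $Z$ meets $Y$ in a smooth subscheme.

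Since $X_\sm$ is smooth over $k$ but may fail to be equidimensional (for example when $X$ is not generically smooth, which can happen over imperfect fields), I would apply the theorem one connected component at a time. Write $X_\sm = \coprod_i X^i_\sm$ for the decomposition into connected components; since $X_\sm$ is quasi-projective over $k$, there are only finitely many such components. Each $X^i_\sm$ is a smooth quasi-projective $k$-scheme of pure dimension $\dim(X^i_\sm) \ge m_s$ by the definition of $m_s$. The second clause of \eqref{eqn:Extra-condn} then gives
\[
\edim(Z \cap X^i_\sm) \le \edim(Z \cap X_\sm) < m_s \le \dim(X^i_\sm),
\]
so \cite[Theorem~7]{KA} applies to $X^i_\sm$ and yields, for every $d \gg 0$, a dense open subscheme $\sU_i \subset |H^0(\P^n_k, \sI_Z(d))|$ such that $H \cap X^i_\sm$ is smooth for every $H \in \sU_i(k)$. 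The finite intersection $\sU = \bigcap_i \sU_i$ is the dense open we seek, because $H \cap X_\sm = \coprod_i (H \cap X^i_\sm)$ is smooth precisely when each piece is.

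The only real (and modest) obstacle is the bookkeeping of components, which is unavoidable since $X_\sm$ is generally not equidimensional. The first clause $\ov{Z \cap X_\sm} = Z \cap \ov{X}$ of \eqref{eqn:Extra-condn} plays no direct role in this particular lemma but is part of the standing hypotheses of \S\ref{sec:B-sm*}; I expect it will be used in the subsequent results of that section to transfer smoothness conclusions on $X_\sm$ to statements about $X$ itself, by ensuring that the base locus imposed on $X$ is not larger than what is controlled on the smooth part.
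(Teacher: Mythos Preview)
Your proposal is correct and follows the same approach as the paper: the lemma is stated in the paper simply as ``the following result from \cite[Theorem~7]{KA}'', with no further argument. Your decomposition into connected components is a reasonable way to make the application explicit when $X_\sm$ is not equidimensional, but this is an elaboration of the citation rather than a different route.
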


We shall now prove Bertini theorems over $k$ by
weakening the condition on $Z \cap X$ (allowing it to have positive dimension)
on the one hand, and strengthening the condition on $X_\reg$
(requiring it to be smooth) on the other hand.

\vskip .2cm


\begin{thm}\label{thm:B-smooth-inf}
  In the situation of \notbref{notat:BT-11}, a general hypersurface $H \subset \P^n_k$
  containing $Z$ satisfies the following.
\begin{enumerate}
\item
  $T \cap H = \emptyset$.
  \item
$X_\reg \cap H$ is smooth.
\item
  $X_{R_a+ S_b} \cap H$ is an $(R_a + S_b)$-scheme if $Z \cap X \subset X_{S_{b+1}}$.
\item
  $X \cap H$ is reduced if $X$ is reduced and $Z \cap X \subset X_{S_2}$.
\item
  $X_\nor \cap H$ is normal if $Z \cap X \subset X_{S_3}$. 
\end{enumerate}
\end{thm}
\begin{proof}
  Under our assumptions,  (1) follows directly from \lemref{lem:good} while (2)
  follows from \lemref{lem:KL-smooth}. The statements (4) and (5) are special cases of
  (3). Hence, it remains to show (3). 

  Since $X$ is of pure dimension $m$ and
  $X_{R_a+ S_b}$ is open in $X$, we can assume without loss of generality that
  $X$ is an $(R_a + S_b)$-scheme. Since $Z \cap X \subset  X_{S_{b+1}}$, it follows
  that $Z \cap \Sigma^b_X = \emptyset$. We now apply \lemref{lem:KL-smooth}, and
  \lemref{lem:good} (with $T$ replaced by $T \cup \Sigma^b_X$). We get that for
  all $d \gg 0$, there is a nonempty open subscheme $\sU \subset |H^0(\P^n_k, \sI_{Z}(d))|$
  such that for every $H \in \sU(k)$, $X_\reg \cap H$ is smooth over $k$ and
  $(X \cap H) \cap \Sigma^b_X = \emptyset$. We now repeat the argument in the
  last three paragraphs in the proof of \thmref{thm:Bertini-reg-0} to conclude that
  $X\cap H$ satisfies (3). 
\end{proof}

\begin{thm}\label{thm:B-smooth-irr-inf}
  In the situation of \notbref{notat:BT-11}, assume further that $X$ is irreducible of
  dimension $m \ge 2$. Assume also that the codimension of $Z \cap X$ is at least two in $X$.
  Then a general hypersurface $H \subset \P^n_k$ containing $Z$ satisfies the following.
  \begin{enumerate}
  \item
    $T \cap H = \emptyset$.
  \item
$X_\reg \cap H$ is smooth.  
\item
$X \cap H$ is irreducible.
\item
$X \cap H$ is integral if $X$ is integral and $Z \cap X \subset X_{S_2}$.
\end{enumerate}
\end{thm}
\begin{proof}
  Combine \thmref{thm:B-smooth-inf} with \lemref{lem:B-irr-inf}. 
\end{proof}

\section{Bertini for the $(R_a + S_b)$-property over finite fields}
\label{sec:BNT*}
In this section, we shall use \lemref{lem:Key*} and the Bertini-smoothness theorems
of Poonen \cite{Poonen-1} and Wutz \cite{Wutz} to prove our Bertini theorem for the
$(R_a + S_b)$-property over finite fields. 
We shall assume throughout this section that $k$ is a finite field.

\subsection{Poonen's density function}\label{sec:DF}
Let $S = k[x_0, \ldots , x_n]$ be the homogeneous coordinate ring of $\P^n_k$.
Let $S_d \subset S$ be the $k$-subspace of homogeneous polynomials of degree
$d \ge 0$ and let $S_{\homg} = {\underset{d \ge 1}\bigcup} S_d
\subset S$. For each $f \in S_d$, let $H_f \subset \P^n_k$ denote the
hypersurface of $\P^n_k$ defined by $f$. 
Let $Z \subset \P^n_k$ be a closed subscheme defined by the sheaf of 
ideals $\sI_Z$. For any integer $d \ge 0$,
let $I_{Z,d} = H^0(\P^n_k, \sI_Z(d))$. Then
$I_Z = {\underset{d \ge 0}\oplus} I_{Z,d}$ is the homogeneous ideal
of $S$ which defines $Z$. We let $I^Z_{\homg} = {\underset{d \ge 1}\bigcup} I_{Z,d}$.

Recall that there is a surjective
map of Zariski sheaves $\sO^{\oplus (n+1)}_{\P^n_k} \surj \sO_{\P^n_k}(1)$
on $\P^n_k$, given by $(f_0, \ldots , f_n) \mapsto 
\stackrel{n}{\underset{i = 0}\sum} f_ix_i$.
Tensoring this surjection with $\sI_Z$ and twisting by $d \gg 1$,
we see that there exists $c_Z \gg 1$ such that
\begin{equation}\label{eqn:surjective}
I_{z,d+1} = S_1I_{z,d} \ \mbox{for \ all} \ d \ge c_Z.
\end{equation}

As the referee has pointed out, $c_Z$ is closely related to the Castelnuovo-Mumford regularity
of $\sI_Z$.

For a subset $\sP \subset I^Z_{\homg}$, we let
$\mu_Z(\sP) = {\underset{d \to \infty}{\rm lim}} \frac{\#(\sP \cap I_{Z,d})}
{\# I_{Z,d}}$,
if the limit exists.
We call $\mu_Z(-)$ {\sl the density function} on the power set of
$I^Z_{\homg}$. When we replace the limit on the right hand side 
in the definition of $\mu_Z(\sP)$
by the limit superior (resp. inferior), we shall denote the associated
density function by $\ov{\mu}_Z(-)$ (resp. $\un{\mu}_Z(-)$).
When $Z = \emptyset$, we shall write $\mu_Z(\sP)$ as $\mu(\sP)$.
The following is easy to prove.

\begin{lem}\label{lem:Elem}
Given two subsets $\sP$ and $\sP'$ of $I^Z_{\homg}$, the following hold.
\begin{enumerate}
\item
If $\mu_Z(\sP)$ exists, then one has $0 \le \mu_Z(\sP) \le 1$.
Moreover, $\mu_Z(\sP) > 0$ implies that there exists $d_0 \gg 0$ such that
$(\sP \cap I_{Z,d}) \neq \emptyset$ for all $d \ge d_0$.
\item
If $\mu_Z(\sP)$ and $\mu_Z(\sP')$ both exist and $\mu_Z(\sP') = 1$, then
$\mu_Z(\sP \cap \sP') = \mu_Z(\sP)$.
\end{enumerate} 
\end{lem}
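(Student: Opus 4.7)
The plan is to treat the two items separately, since both are set-theoretic manipulations with finite quotients followed by a limit argument.

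For (1), I would first note that $\sP \cap I^Z_d \subseteq I^Z_d$ gives $0 \le \#(\sP \cap I^Z_d)/\#I^Z_d \le 1$ for every $d$ (where $\#I^Z_d > 0$ for $d$ large, since otherwise the density would be $0$), so passing to the limit yields $0 \le \mu_Z(\sP) \le 1$. For the second part of (1), assuming $\mu_Z(\sP) > 0$, pick $\varepsilon = \mu_Z(\sP)/2 > 0$; by the definition of the limit, there is $d_0$ such that for every $d \ge d_0$ we have $\#(\sP \cap I^Z_d)/\#I^Z_d > \mu_Z(\sP)/2 > 0$, forcing $\sP \cap I^Z_d \neq \emptyset$.

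For (2), the key observation is the inclusion-exclusion relation $\#(\sP \cap I^Z_d) - \#(\sP \cap \sP' \cap I^Z_d) = \#((\sP \setminus \sP') \cap I^Z_d) \le \#I^Z_d - \#(\sP' \cap I^Z_d)$, obtained from $\sP \setminus \sP' \subseteq I^Z_{\homg} \setminus \sP'$. Dividing by $\#I^Z_d$ and taking $\liminf$ in $d$, combined with $\mu_Z(\sP')=1$, yields $\mu_Z(\sP) - \un{\mu}_Z(\sP \cap \sP') \le 0$, i.e.\ $\un{\mu}_Z(\sP \cap \sP') \ge \mu_Z(\sP)$. On the other hand $\sP \cap \sP' \subseteq \sP$ gives $\ov{\mu}_Z(\sP \cap \sP') \le \mu_Z(\sP)$. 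Sandwiching,
\[
\mu_Z(\sP) \le \un{\mu}_Z(\sP \cap \sP') \le \ov{\mu}_Z(\sP \cap \sP') \le \mu_Z(\sP),
\]
so the limit exists and equals $\mu_Z(\sP)$.

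There is no real obstacle here; the only minor subtlety is part (2), where one must observe that $\mu_Z(\sP \cap \sP')$ is not assumed to exist a priori, so one has to bound $\un{\mu}_Z$ and $\ov{\mu}_Z$ separately and conclude existence from the sandwich. Everything else is a direct consequence of the set containments and elementary properties of limits of ratios of cardinalities.
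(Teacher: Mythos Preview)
Your proposal is correct and follows essentially the same route as the paper: for (1) the paper also argues directly from the ratio bounds (phrasing the second claim as a contrapositive), and for (2) the paper uses the same inclusion--exclusion inequality $\#(\sP \cap \sP' \cap I^Z_d) \ge \#(\sP \cap I^Z_d) + \#(\sP' \cap I^Z_d) - \# I^Z_d$, only written out with an explicit $\epsilon$--$d_0$ argument rather than your $\un{\mu}_Z/\ov{\mu}_Z$ sandwich. Your remark that the existence of $\mu_Z(\sP\cap\sP')$ must be deduced rather than assumed is exactly what the paper's two-sided $\epsilon$ bound is doing.
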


\subsection{Main results over finite fields}\label{sec:BNT}
In this subsection, we shall prove the Bertini theorem
for the $(R_a + S_b)$-property over the finite field $k$. 
This implies Bertini theorems for normality
and reducedness. In particular,  we extend Seidenberg's
Lefschetz hyperplane section theorem \cite{Seidenberg}
to normal varieties over finite fields.
As much as we are aware, these results were not known over finite fields in
any form before.

\vskip .3cm

We need the following generalization of \cite[Lemma~3.1]{Poonen-3}.

\begin{lem}\label{lem:positive-dim}
Let $W$ be a positive dimensional irreducible subscheme of $\P^n_k$
(resp. $\P^n_{\ov{k}}$). Let $Z$ be a closed subscheme of $\P^n_k$ such that
$W_\red \not\subset Z$ (resp. $W_\red \not\subset Z_{\ov{k}}$).   
Let $\sP = \{f \in I^Z_{\homg}| W \subset H_f\}$.
Then $\mu_Z(\sP) = 0$.
\end{lem}
\begin{proof}
We can assume $W$ to be reduced to prove the lemma.
We can then replace $W$ by $\ov{W}$ to prove the lemma. We can therefore
assume that $W$ is closed in the ambient projective space.
If $W$ lies in $\P^n_{\ov{k}}$, then it lies
in the projective space defined over a finite field extension
of $k$. Since $H_f$ is defined over $k$, we can therefore replace $W$ by its 
scheme-theoretic image in $\P^n_k$ in order to prove the lemma.
In conclusion, we can assume that $W \subset \P^n_k$.

Since $W$ is irreducible and not contained in $Z$, it follows that
$U := W \setminus Z$ is a positive dimensional irreducible subscheme
of $\P^n_k$ disjoint from $Z$. Moreover, $W \subset H_f$ if and only if
$U \subset H_f$ for any $f \in I^Z_{\homg}$.
We can therefore assume that $W \cap Z = \emptyset$.

For any closed point $w \in W$, we know that $\ov{\mu}_Z(\sP)$ is bounded above by
\[
  \begin{array}{lll}
    \mu_Z(\{f \in  I^Z_{\homg}| w \in H_f\}) & = &
 \mu_Z(\{f \in  I^Z_{\homg}| \{w\} \cap H_f \neq \emptyset \}) \\
                                             & = & 1- (1-(\# k(w))^{-1}) \\
                                             & = & (\# k(w))^{-1},
  \end{array}
\]
where the second equality is given by
\lemref{lem:Zero-density}. Since $\dim(W) > 0$, we can choose
$(\# k(w))^{-1}$ to be arbitrarily small and this implies that
$\ov{\mu}_Z(\sP) = 0$. In particular, we get $\mu_Z(\sP) = 0$. 
\end{proof}

\begin{lem}\label{lem:Zero-density}
Let $Z$ and $W$ be two closed subschemes of $\P^n_k$ such that
$W$ is finite and $Z \cap W = \emptyset$.   
Let $\sP = \{f \in I^Z_{\homg}| W \cap H_f = \emptyset\}$.
Then $\mu_Z(\sP) = {\underset{w \in W}\prod} (1 - (\# k(w))^{-1})$.
\end{lem}
\begin{proof}
We can assume that $W$ is reduced in order to prove the lemma.
Since $Z \cap W = \emptyset$, we get $\sI_Z \cdot \sO_W = \sO_W$.
Since we also have $\sO_W(d) \cong \sO_W$, we conclude from
\cite[Lemma~2.1]{Poonen-2} that the map
$\phi_d \colon I_{Z,d} \to H^0(W, \sO_W)$, induced by
the map of sheaves $\sI_Z \to \sI_Z \cdot \sO_W$, is surjective
for $d \ge c_Z + \dim_k H^0(W, \sO_W)$, where $c_Z$ is as in 
~\eqref{eqn:surjective}. Since $H^0(W, \sO_W) \cong
{\underset{w \in W}\prod} k(w)$ and $\phi_d(f) = f|_W$ (see \S~\ref{sec:B-Sing*} for the definition of $f|_W$), it follows that
$\sP \cap I_{Z,d} = \phi^{-1}_d({\underset{w \in W}\prod} k(w)^{\times})$.
We therefore get
\[
\mu_Z(\sP) = {\underset{d \to \infty}{\rm lim}} 
\frac{\#(\phi^{-1}_d({\underset{w \in W}\prod} k(w)^{\times})}
{\# I_{Z,d}} = 
{\underset{d \to \infty}{\rm lim}} 
\frac{{\underset{w \in W}\prod} k(w)^{\times}}
{{\underset{w \in W}\prod} k(w)}
= {\underset{w \in W}\prod} (1 - (\# k(w))^{-1}).
\]
This finishes the proof.
\end{proof}


Our setting for the Bertini theorem for the $(R_a + S_b)$-property is as follows.
\begin{notat}\label{notat:BT-12}
  Let $k$ be a finite field and $X \subset \P^n_k$ a subscheme of pure dimension $m \ge 0$.
For $r \ge 0$, let $\Sigma^r_X \subset X$ be as in ~\eqref{eqn:SR-locus}.
Let $T \subset \P^n_k$ be a finite set containing $\Delta(X)$ (cf. \notbref{notat:Delta-X}).
Let $Z \subset \P^n_k$ be a closed subscheme such that $Z \cap T = \emptyset$ and
$\tdim(Z \cap X_{\rm sm}) < m$ (cf. \S~\ref{sec:B-sm*}).
\end{notat}

\begin{thm}\label{thm:R-S-main}
  In the situation of \notbref{notat:BT-12}, assume further that $X$ is an
  $(R_a+S_b)$-scheme for some integers $a, b \ge 0$ such that $Z \cap \Sigma^b_X = \emptyset$.
  Let $\sP \subset I^Z_{\homg}$ be the set of homogeneous polynomials $f$ such that the
  subscheme $X \cap H_f$ satisfies the following.
\begin{enumerate}
\item
$T \cap H_f = \emptyset$.
\item
$X_\sm \cap H_f$ is smooth.
\item
$X \cap H_f$ is an $(R_a+S_b)$-scheme of pure dimension $m-1$.
\end{enumerate}

Then $\sP$ contains a subset $\sP'$ such that $\mu_Z(\sP') > 0$.
\end{thm}
\begin{proof}
Since $k$ is perfect and $X$ is generically reduced.
it follows that $X_{\rm sm} = X_{\rm reg} \subset X$ is a dense
open in $X$. In particular, $\dim(X_{\rm sing}) \le m-1$.

We set $W = T \bigcup \Sigma^b_X$. 
We further write $W = W_1 \amalg W_2$, where
$W_1$ consists of the closed points of $\P^n_k$ lying in $W$ and $W_2$ 
consists of the non-closed points of $\P^n_k$ lying in $W$.
We write $W_1 = \{P_1, \ldots , P_{r}\}$.

Consider $W_1 = \stackrel{r}{\underset{i =1}\amalg} \Spec(k(P_i))$ as a finite closed 
subscheme of $\P^n_k$ and let $W'_1 =  \stackrel{r}{\underset{i =1}\prod}
k(P_i)^\times \subset H^0(W_1, \sO_{W_1})$.
We let  $U = X_{\rm sm} \setminus W_1$ and $V = U \cap Z$. Letting 
\[
\sP_0 = \left\{f \in I^Z_{\homg} | H_f \cap U \ \mbox{is \ smooth \
of \ dimension} \ m-1, \ \mbox{and} \ f|_{W_1} \in\, W'_1\right\},
\]
it follows from \propref{prop:Poonen} (see \remref{remk:WP}) that
\begin{equation}\label{eqn:RS-main-1-0}
\mu_Z(\sP_0) = \frac{\# W'_1}{\# H^0(W_1, \sO_{W_1})}
\frac{\zeta_V(m+1)}{\zeta_U(m+1) \stackrel{m-1}{\underset{e = 0}\prod}
\zeta_{V_e}(m-e)} > 0.
\end{equation}

For any point $x \in \P^n_k$, we let
$\sP_x = \{f \in I^Z_{\homg}| x \notin H_f\}$ and define
$\sP' = ({\underset{x \in W_2}\cap} \sP_x) \bigcap \sP_0$.
Since no point of $W_2$ is either closed in $\P^n_k$ or lies in $Z$,
it follows from \lemref{lem:positive-dim} that 
$\mu_Z(\sP_x) = 1$ for all $x \in W_2$. We conclude from
~\eqref{eqn:RS-main-1-0} and \lemref{lem:Elem} that
$\mu_Z(\sP') > 0$.

We shall now show that $\sP' \subseteq \sP$, which will finish the proof of the
theorem.
We fix an element $f \in \sP'$ and let $Y = X \cap H_f$. The property (1) is clear. 
It is clear from the definition of $\sP_0$ that $Y \cap X_{\rm sm}$
is smooth except possibly at the points of $W_1$.
However, it follows from the definition of $W'_1$ and $\sP_0$ that
$H_f$ contains no point of $W_1$. We conclude that $Y \cap X_{\rm sm}$ is smooth.
It remains to show that $Y$ is an $(R_a + S_b)$-scheme.

First, the argument given in the proof of \thmref{thm:Bertini-reg-0} shows
verbatim that $Y$ is regular in codimension $a$. Second, to show that $Y$ is an
$S_b$-scheme, we only need to show using (2) that $\sO_{Y,y}$ is an $S_b$-ring for
$y \in Y \cap X_\sing$. We now fix such a point. Since
$f \in {\underset{x \in W_2}\bigcap} \sP_x$ and also
$f \in k(P_i)^{\times}$ for every $1 \le i \le r$,
it follows that $y \notin W$. In particular, $y \notin \Sigma^b_X$.
But this implies that either $\dim(\sO_{X,y}) \le b$ or
${\rm depth}(\sO_{X,y}) \ge b+1$. We can now repeat the last two paragraphs
of the proof of \thmref{thm:Bertini-reg-0} to conclude the proof.
\end{proof}

\begin{remk}\label{remk:WP}
  We note here that we applied \propref{prop:Poonen} in the previous result
  with  $t =1$, and took
$X_\sm, W_1$ and $W'_1$ for $X_1, Y$ and $T$ of the proposition, respectively.
In this special case, \propref{prop:Poonen} is already known and
is due to Wutz \cite[Theorem~1.1]{Wutz} (and to Poonen
\cite[Theorem~1.2]{Poonen-1} when $Z = \emptyset$). 
The reference to \propref{prop:Poonen} was
only to recall the underlying notations.
\end{remk}

\begin{cor}\label{cor:B-reduced-fin}
  In the situation of \notbref{notat:BT-12}, assume further that $X$ is reduced and
  $Z \cap X \subset X_{S_2}$ (resp. normal and $Z \cap X \subset X_{S_3}$).
Let $\sP$ be the set of polynomials $f \in I^Z_\homg$ for which the following hold.
\begin{enumerate}
\item
  $T \cap H_f = \emptyset$.
  \item
$X_\sm \cap H_f$ is smooth.
\item
$X \cap H_f$ is reduced (resp. normal).
\end{enumerate}

Then $\sP$ contains a subset with positive density.
\end{cor}
\begin{proof}
Apply \thmref{thm:R-S-main} with $(a,b) = (0,1)$ (resp. $(a,b) = (1,2)$).
\end{proof}

\subsection{A Bertini theorem for singular schemes over finite fields}
\label{sec:B-Sing*}
In this subsection, we shall prove a Bertini theorem for singular schemes over the finite
field $k$.
This result will play a key role in the proofs of Theorems~\ref{thm:B-base-reg}
and ~\ref{thm:B-base-closed}.
We shall deduce this Bertini theorem as a consequence of an extension of
the main result of \cite{Wutz}.

Recall that the arithmetic zeta function for $X \in \Sch_k$
is defined to be the power series

\[
\zeta_X(t) = Z_X(q^{-t}) :=\exp\left(\stackrel{\infty}{\underset{s = 1}\sum} 
\frac{\# X(\F_{q^s})}{s} q^{-ts}\right) = {\underset{x \in X_{(0)}}\prod}
(1 - q^{-t\deg(x)})^{-1},
\]

where $X_{(0)}$ is the set of closed points of $X$. It is a consequence of Galois theory that $\zeta_X(t) \in \Z[[t]]$.
Furthermore, it was shown by Dwork that $Z_X(t) \in \Q(t)$.
If $X = Y \cup U$, where $Y \subset X$ is closed and $U = X \setminus Y$,
then $\zeta_X(t) = \zeta_Y(t)\zeta_U(t)$.

For a finite closed subscheme
$W \subset \P^n_k$ and a homogeneous polynomial $f \in S_d$,
we let $f|_W$ be the element of $H^0(W, \sO_W)$ that on each connected
component $W_i$ of $W$ equals the restriction of $x^{-d}_jf$ to $W_i$, where
$j = j(i)$ is the smallest $j \in \{0, 1, \ldots , n\}$ such that the
coordinate $x_j$ is invertible on $W_i$. 
We refer the reader to \S~\ref{sec:B-sm*} for the definitions of $X_e$ and
$\tdim(X)$.
By a smooth scheme of dimension $m$, we shall mean a smooth
scheme of pure dimension $m$.

Let $Y \subset \P^n_k$ be a finite closed subscheme and
$T \subset H^0(Y, \sO_Y)$ a nonempty subset.
Let $U_1, \ldots , U_t$ be smooth and pairwise disjoint
equidimensional
subschemes of $\P^n_k$ such that $Y \cap U_i = \emptyset$ for every $i$.
Let $Z \subset \P^n_k$ be a closed subscheme such that $Y \cap Z =
\emptyset$ and $\tdim(Z \cap U_i) < m_i$ for $1 \le i \le t$ (cf. \S~\ref{sec:B-sm*}),
where $m_i = \dim(U_i)$. Let $V_i = U_i \cap Z$. Define
\begin{equation}\label{eqn:PW-gen-0}
\sP = \{f \in I^Z_\homg|f|_Y \in T \ \mbox{and} \ U_i \cap H_f \ 
\mbox{is \ smooth \ of \ dimension} \ m_{i} - 1 \ \forall \ 1 \le i \le t\}.
\end{equation}


The following result is obtained by mimicking the proof (but not a consequence)
of the Bertini theorem of Wutz \cite{Wutz}.

\begin{prop}\label{prop:Poonen}
Under the above assumptions, we have 
\[
\mu_Z(\sP) = \frac{\# T}{\# H^0(Y, \sO_Y)} \stackrel{t}{\underset{i = 1}\prod}
\frac{1}{\zeta_{U_i \setminus V_i}(m_i +1) 
\stackrel{m_i -1}{\underset{e = 0}\prod}\zeta_{(V_i)_e}(m_i - e)} > 0.
\]
\end{prop}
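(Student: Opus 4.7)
Proof plan. The strategy adapts Poonen's closed-point sieve (with Wutz's Taylor condition refinement) to the singular setting with prescribed base locus $Z$. Membership of $f \in I^Z_d$ in $\sP$ decomposes as: (a) the global Taylor condition $f|_Y \in T$, and (b) local smoothness of $H_f \cap U_i$ at every closed point of each $U_i$. Since $Y \cap Z = \emptyset$ and $Y$ is finite, the exact sequence $0 \to \sI_{Y \cup Z}(d) \to \sI_Z(d) \to \sO_Y(d) \to 0$ together with Serre vanishing gives a surjection $I^Z_d \surj H^0(Y, \sO_Y(d)) \cong H^0(Y, \sO_Y)$ for $d \gg 0$. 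Thus (a) carves out an exact density $\#T/\#H^0(Y, \sO_Y)$, asymptotically independent of (b).

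For (b) I split the closed points of each $U_i$ into two types. At $x \in U_i \setminus V_i$, non-smoothness of $H_f \cap U_i$ at $x$ amounts to $f \in \fm_{X_i,x}^2$; since $\dim_{k(x)}\fm_{X_i,x}/\fm_{X_i,x}^2 = m_i$ by smoothness of $U_i$ and since $x \notin Z$ makes $I^Z_d \surj \sO_{X_i,x}/\fm_{X_i,x}^2$ for $d \gg 0$ by Serre vanishing, the local ``bad'' probability is $q^{-(m_i+1)\deg(x)}$. At $x \in (V_i)_e$, $f$ vanishes automatically at $x$, and its image in $\fm_{X_i,x}/\fm_{X_i,x}^2$ is constrained to the subspace $W_x = (\sI_{Z \cap X_i,x} + \fm_{X_i,x}^2)/\fm_{X_i,x}^2$ of dimension $m_i - e$ (the hypothesis $\edim(Z \cap X_i) < m_i$ ensures $e < m_i$, giving a positive exponent). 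Arguing as in the proof of \lemref{lem:B-base-sing} via the sequence \eqref{eqn:B-base-sing-0}, the induced map $I^Z_d \to W_x$ is surjective for $d \gg 0$, so the bad probability at $x$ is $q^{-(m_i-e)\deg(x)}$. Truncating to closed points of degree $\le r$ and taking $d$ sufficiently large, these finitely many local conditions become simultaneously independent and, together with the Taylor factor, yield the Euler product of the proposition truncated at degree $r$.

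The technical core is to show that the density of $f$'s singular at some closed point of degree $>r$ tends to $0$ as $r \to \infty$. Following Poonen, one writes $f = f_0 + \sum_{j} g_j^p h_j$ inside $I^Z_d$ with suitable auxiliary polynomials $g_j$; as the $h_j$ vary, the partial derivatives $\partial f/\partial x_j$ become essentially independent, and a Lang--Weil count on each smooth $U_i$ and on the strata $(V_i)_e$ bounds the expected number of high-degree common zeros. Wutz's framework controls the interaction with the Taylor condition at $Y$, and the entire argument is carried out inside the graded ideal $I_Z$ using \eqref{eqn:surjective}. Positivity $\mu_Z(\sP) > 0$ then follows: each local factor lies strictly in $(0,1)$ since $m_i-e \geq 1$, and the Euler products converge because $\dim((V_i)_e) < m_i - e$ (a consequence of $\edim(Z \cap X_i) < m_i$). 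The principal obstacle is precisely this high-degree estimate: the $p$-th power decomposition must be adapted so that the independent partial derivatives live inside the restricted ideal $I_Z$ rather than all of $S$, and the Lang--Weil bound must accommodate the constrained linear parts along each stratum $(V_i)_e$. This requires careful modification of the Poonen--Wutz sieve estimates rather than a direct quotation, but the architecture of the argument is parallel.
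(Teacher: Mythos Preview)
Your proposal is correct and takes essentially the same approach as the paper, whose proof is itself a terse sketch that defers directly to Wutz's lemmas for the low/medium/high-degree decomposition of Poonen's sieve carried out inside $I^Z$. The one presentational wrinkle is that the paper (following Poonen and Wutz) splits the tail into a \emph{medium} range $r \le \deg(x) \le (d - c_Z)/(m_i+1)$, handled by a straightforward union bound using the exact local probabilities you computed, and a genuinely \emph{high} range where the $p$-th-power decoupling of partial derivatives is required; you describe only the latter mechanism, but this is a minor organizational point rather than a gap.
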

\begin{proof}
  This proposition would have been a direct consequence of \cite[Theorem~1.1]{Wutz}
  if the scheme $U := \ \stackrel{t}{\underset{i =1}\bigcup} U_i$
  was smooth and equidimensional
  (in particular, $m_i = m_j$ for all $1 \le i, j \le t$). Nonetheless, the general case
  is deduced by rewriting the argument of op. cit. appropriately.
  We give a sketch.

  For $U \in \Sch_k$, let $U_{< r}$ (resp. $U_{> r}$)
be the set of closed points
of $U$ of degree $< r$ (resp. $> r$). 
Let $c_Z$ be the integer found in ~\eqref{eqn:surjective}.
We define the following sets.
\begin{equation}\label{eqn:PW-gen-1}
\sP_r = \{f \in I^Z_{\homg}|f|_Y  \in T \ \mbox{and} \  U_i \cap H_f \ 
\mbox{is \ smooth \ of \ dimension $m_i -1$ \ \ at} \ x \ \forall \ i
\end{equation}
\[
\hspace*{9cm} \ 
\mbox{and} \ \forall \ 
\mbox{closed point} \ x \in (U_i)_{<r}\}.
\]
\[
\sQ^{\rm med}_{i,r} = {\underset{d \ge 1}\bigcup} 
\{f \in I_{Z,d}| \exists \ \mbox{a \ closed \ point} \ x \in U_i \ 
\mbox{with} \ r \le \deg(x) \le \frac{d-c_Z}{m_i + 1} \ \mbox{such \ that}
\]
\[
\hspace*{5.5cm} U_i \cap H_f \ \mbox{is \ not \ 
smooth \ of \ dimension $m_i -1$ \ at} \ x\}.
\]
\[
\sQ^{\rm high}_{U_i \setminus V_i} =
{\underset{d \ge 1}\bigcup} 
\{f \in I_{Z,d}| \exists \ \mbox{a \ closed \ point} \ x \in 
(U_i \setminus V_i)_{> {(d-c_Z)}/{(m_i +1)}}
\ \mbox{such \ that}  \  U_i \cap H_f
\]
\[
\hspace*{6cm} \ \mbox{is \ not \ 
smooth \ of \ dimension $m_i -1$ \ at} \ x\}.
\]
\[
\sQ^{\rm high}_{V_i} =
{\underset{d \ge 1}\bigcup} 
\{f \in I_{Z,d}| \exists \ \mbox{a \ closed \ point} \ x \in 
(V_i)_{> {(d-c_Z)}/{(m_i +1)}}
\ \mbox{such \ that}  \  U_i \cap H_f
\]
\[
\hspace*{6cm} \ \mbox{is \ not \ 
smooth \ of \ dimension $m_i -1$ \ at} \ x\}.
\]

Since $\{U_i\}_{1 \le i \le t}$ are pairwise disjoint, an easy 
calculation (e.g., see \cite[Lemmas~2.3, 2.4]{Wutz}) shows that
\[
\mu_Z(\sP_r) = \frac{\# T}{\# H^0(Y, \sO_Y)} 
\stackrel{t}{\underset{i = 1}\prod}
\left[{\underset{x \in (U_i \setminus V_i)_{< r}}\prod} (1 - q^{-(m_i + 1)\deg(x)})
\cdot {\underset{x \in (V_i)_{< r}}\prod} (1 - q^{-(m_i - e)\deg(x)}).
\right]
\]

On the other hand, it follows from \cite[Lemma~3.2]{Wutz} that
${\underset{r \to \infty}\lim} \ov{\mu}_Z(\sQ^{\rm med}_{i,r}) = 0$
for each $i$. Moreover, it follows from \cite[Lemmas~4.1, 4.2]{Wutz}
that $\ov{\mu}_Z(\sQ^{\rm high}_{U_i \setminus V_i}) = 0 = 
\ov{\mu}_Z(\sQ^{\rm high}_{V_i})$ for each $i$.
Since $\sP \subset \sP_r \subset \sP \bigcup \left(\stackrel{t}{\underset{i = 1}\cup}
(\sQ^{\rm med}_{i,r} \cup \sQ^{\rm high}_{U_i \setminus V_i} \cup \sQ^{\rm high}_{V_i})
\right)$, it follows that $\ov{\mu}_Z(\sP)$ and $\un{\mu}_Z(\sP)$ differ from
$\mu_Z(\sP_r)$ at most by $\stackrel{t}{\underset{i = 1}\sum} 
(\ov{\mu}_Z(\sQ^{\rm med}_{i,r}) + \ov{\mu}_Z(\sQ^{\rm high}_{U_i \setminus V_i}) + 
\ov{\mu}_Z(\sQ^{\rm high}_{V_i}))$.
We conclude that 
${\mu}_Z(\sP) = {\underset{r \to \infty}\lim}\mu_Z(\sP_r)$.

Since $\tdim(V_i) < m_i$ by our assumption, 
we get $\dim((V_i)_e) < m_i - e$ for every $1 \le i \le t$. 
We also have the inequality $\dim(U_i \setminus V_i) < m_i + 1$
for every $1 \le i \le t$. We conclude from  
the above computation of $\mu_Z(\sP_r)$ that the above limit converges
and its value is

\[
\frac{\# T}{\# H^0(Y, \sO_Y)} \stackrel{t}{\underset{i = 1}\prod}
\frac{1}{\zeta_{U_i \setminus V_i}(m_i +1) 
\stackrel{m_i -1}{\underset{e = 0}\prod}\zeta_{(V_i)_e}(m_i - e)}.
\]
Since this is clearly positive, we conclude the proof.
\end{proof}

We now state a Bertini theorem for singular schemes.
Let $T \subset \P^n_k$ be a finite set of points.
Assume that $U \subset \P^n_k$ is a subscheme which is a disjoint union of locally closed
subschemes $U_1, \ldots , U_t$ such that $U_i$ is smooth of pure dimension
$m_i \ge 0$ and $U_i \cap T = \emptyset$ for $1 \le i \le t$. 
Note that $U$ itself may not be smooth.
Let $Z \subset \P^n_k$ be a closed subscheme such that $T \cap Z =
\emptyset$ and $\tdim(Z \cap U_i) < m_i$ for $1 \le i \le t$. Define
\begin{equation}\label{eqn:B-sing-0}
\sP = \{f \in I^Z_{\homg}|T \cap H_f = \emptyset \ \mbox{and} \ 
f \notin \fm^2_{U,x} \ \mbox{for \ all \ closed \ points} \ x \in \, U\}.
\end{equation}

\begin{thm}\label{thm:PW-sing-gen}
Under the above assumptions, there exists $\sP' \subseteq \sP$ such that $\mu_Z(\sP') > 0$.
\end{thm}
\begin{proof}
We write $T = T_1 \amalg T_2$, where
$T_1$ consists of the closed points of $\P^n_k$ lying in $T$ and $T_2$ 
consists of the non-closed points of $\P^n_k$ lying in $T$.
We write $T_1 = \{P_1, \ldots , P_{r}\}$ and
let $T'_1 =  \stackrel{r}{\underset{i =1}\prod} k(P_i)^\times
\subset H^0(T_1, \sO_{T_1})$.
Letting
\[
\sP_0 = \{f \in I^Z_{\homg}|f|_{T_1}  \in T'_1 \ \mbox{and} \  U_i \cap H_f \ 
\mbox{is \ smooth \ of \ dimension} \ m_i -1 \ \forall \ 1 \le i \le t\},
\]
it follows from \propref{prop:Poonen} that $\mu_Z(\sP_0) > 0$.

For any point $x \in \P^n_k$, we let
$\sP_x = \{f \in I^Z_{\homg}| x \notin H_f\}$ and define
$\sP' = ({\underset{x \in T_2}\cap} \sP_x) \bigcap \sP_0$.
Since no point of $T_2$ is either closed in $\P^n_k$ or lies in $Z$,
it follows from \lemref{lem:positive-dim} that 
$\mu_Z(\sP_x) = 1$ for all $x \in T_2$. We conclude from
~\eqref{eqn:RS-main-1-0} and \lemref{lem:Elem} that
$\mu_Z(\sP') > 0$.
We only need to show that $\sP' \subseteq \sP$.
But this is an easy exercise and is left to the reader.
\end{proof}

\begin{cor}\label{cor:PW-sing}
  Let $X \subset \P^n_k$ be a subscheme. Let $Y \subset \P^n_k$ be a finite
  closed subscheme and $T \subset H^0(Y, \sO_Y)$ a nonempty subset. Let
 \begin{equation}\label{eqn:PW-sing-00}
   \sP = \{f \in S_{\homg}|f|_Y \in T \ \ \mbox{and} \ \
   f \notin \fm^2_{X,x} \ \mbox{for \ all \ closed \ points}
   \ x \in\, X\setminus Y\}.
\end{equation}
 Then there exists $\sP' \subseteq \sP$ such that
 $\mu(\sP') > 0$.
\end{cor}
\begin{proof}
  Use the fact that $X$ has a finite stratification by smooth subschemes.
  \end{proof}

  \subsection{Bertini theorem for normal crossing schemes}
  \label{sec:Extra}
In this subsection, we list some new consequences of \propref{prop:Poonen}
which could not be derived directly from the Bertini theorem of Wutz.
We mention these consequences because they are often very useful in
the theory of algebraic cycles and class field theory over finite fields
(e.g., see \cite{BK-1}, \cite{BKS} and \cite{GK-2}).
The results of this subsection are not used in this paper.
Readers interested only in the main results of this paper could therefore skip it.

We recall that a $k$-scheme $X$ of pure dimension $m$ with irreducible components $X_1, \ldots , X_r$ is said to be a strict normal crossing (snc) scheme if it is reduced and for all $\emptyset\neq J \subset \{1, \ldots , r\}$, the scheme theoretic intersection $X_J := {\underset{i \in J}\bigcap} X_i$ is either empty or regular of pure dimension $m + 1 - |J|$.  By a scheme of negative dimension, we shall always mean the empty scheme.

Let $Y \subset \P^n_k$ be a finite closed subscheme and $T \subset H^0(Y, \sO_Y)$ a nonempty subset. Let $V \subset \P^n_k$ be a smooth subscheme of pure dimension $m$ disjoint from $Y$. Let $U \subset V$ be an open subscheme and let $E \subset U$ be a strict normal crossing divisor (sncd). Let $E_J$ be defined as above and set $E_{\emptyset}= U$. Let $Z \subset \P^n_k$ be a closed subscheme such that $Y \cap Z = \emptyset$, $V \cap Z \subset U$ and $\tdim(Z \cap E_J) < m - |J|$ for all $J \subset \{1, \ldots , r\}$ (cf. \S~\ref{sec:B-sm*}). Define
\begin{equation}\label{eqn:PW-snc}
\sP = \{f \in I^Z_\homg|f|_Y \in T \ \mbox{and} \ V \cap H_f \ 
\mbox{is \ smooth \ and} \ E \cap H_f \ \mbox{is \ a \ sncd \ on} \ U \cap H_f\}.
\end{equation}

\begin{thm}\label{thm:snc-0}
Under the above assumptions, there exists $\sP' \subseteq \sP$ such that $\mu_Z(\sP') > 0$.
\end{thm}
\begin{proof}
Define $E'_J = E_J \setminus ({\underset{J_1 \supsetneq J}\cup} E_{J_1})$.
Then $U$ is a disjoint union of subschemes $E'_J$ such that
each $E'_J$ is smooth of dimension $m - |J|$ and $\tdim(Z \cap E'_J) < m -|J|$.
Let
\[
\sP'_1 = \{f \in I^Z_\homg|f|_Y \in T \ \mbox{and} \ E'_J \cap H_f \ 
\mbox{is \ empty \ or \ smooth \ of \ dimension} \ m - |J| -1 \ \forall \ 
J\}.
\]
Write $V \setminus U$ as a disjoint union of locally closed subschemes  
$F_1, \ldots , F_s$ such that each $F_j$ is smooth of dimension $n_j \ge 0$.
Let
\[
\sP'_2 = \{f \in I^Z_\homg|f|_Y \in T \ \mbox{and} \ F_j \cap H_f \ 
\mbox{is \ smooth \ of \ dimension} \ m_j-1 \ \forall \ 
1 \le j \le s\}
\]
and $\sP' = \sP'_1 \cap \sP'_2$.

It follows from \propref{prop:Poonen} that $\mu_Z(\sP') > 0$.
It suffices therefore to show that $\sP' \subseteq \sP$.
For this, we first note that $f \in \sP'_1$ implies that
$E \cap H_f$ is generically smooth. In particular, it is an
$R_0$-scheme. Since $E$ is a sncd in the smooth scheme $U$, it is Cohen-Macaulay.
It follows that $E \cap H_f$ is also Cohen-Macaulay.
We conclude that $E \cap H_f$ is reduced.
Other desired properties of $E \cap H_f$ for it to be sncd are immediate.

We next show the smoothness of $U \cap H_f$ for $f \in \sP'_1$.
This is clear away from $E$.
At a point in $E \cap H_f$, the smoothness of $U \cap H_f$ is an easy consequence of
a descending induction on $|J|$ and the elementary fact that
if a locally principal divisor $Y$ on a Noetherian scheme $X$ is regular at a point,
then $X$ is also regular at that point.

To conclude the proof of the theorem, it remains to show that
$V \cap H_f$ is smooth at points away from $U$ for $f \in \sP'_2$. 
It is enough to check this at the closed points of $V \setminus U$.
So let $x \in V \setminus U$ be a closed point. Then it must lie in
$F_j$ for a unique $j \in [1, s]$. Since $F_j$ and $F_j \cap H_f$ are both
smooth at $x$, it follows that the image of $f$ in the local ring $\sO_{F_j,x}$
does not lie in $\fm^2_{F_j,x}$. But then, it can not lie in $\fm^2_{V,x}$ either.
Since $V$ is smooth, this condition is equivalent to the assertion that
$V \cap H_f$ is smooth at $x$.
\end{proof}

\begin{cor}\label{cor:snc-0-dim}
Let $X \subset \P^n_k$ be a smooth 
scheme and let $U \subset X$ be an open subscheme.
Let $D \subset X$ be a closed subscheme such that $D \cap U$ is a
strict normal crossing divisor on $U$. 
Let $Z \subset \P^n_k$ be a closed subscheme such that
$Z \cap X$ is a finite reduced subscheme of $X$ contained in $U$ and
$Z \cap D = \emptyset$. Let
\[
\sP = \{f \in I^Z_\homg|X \cap H_f \ \mbox{is \ smooth \ and} \ D \cap U \cap H_f 
\ \mbox{is \ a \ sncd \ on} \ U \cap H_f\}.
\]
Then $\sP$ contains a subset $\sP'$ such that $\mu_Z(\sP') > 0$.
\end{cor}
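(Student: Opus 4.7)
My plan is to deduce this statement directly from \corref{cor:snc-0} by specializing to the degenerate choice $Y = \emptyset$. With $Y$ empty, the open subscheme $U := X \setminus Y$ coincides with $X$ itself (hence is smooth of dimension $m := \dim(X)$), $E := D \setminus Y$ coincides with $D$ (hence is an snc divisor on $U$), and $Y \cap Z = \emptyset$ holds trivially. The Taylor datum degenerates harmlessly: $H^0(\emptyset, \sO_\emptyset)$ is the zero ring (with a single element), which forces $T = \{0\}$ and makes the condition $f|_Y \in T$ vacuous. Under these choices the set produced by \corref{cor:snc-0} agrees on the nose with the $\sP$ of the present statement, so it will suffice to verify the hypotheses of \corref{cor:snc-0}.

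The one nontrivial input is the embedding-dimension hypothesis $\edim(Z \cap E_J) < m - |J|$ for every $J \subseteq \{1, \ldots, r\}$, where $E_1, \ldots, E_r$ are the irreducible components of $D$ and by convention $E_\emptyset = U$. For $J = \emptyset$ this reduces to $\edim(Z \cap X) < m$. Here I will use that $k$ is finite (hence perfect) and that $Z \cap X$ is finite and reduced: the local rings of $Z \cap X$ are then finite separable extensions of $k$, so $\Omega^1_{Z \cap X/k} = 0$, giving $\edim_x(Z \cap X) = 0$ at every point and hence $\edim(Z \cap X) = 0 < m$, provided $m \ge 1$. For $J \neq \emptyset$, the inclusion $E_J \subset D$ together with the hypothesis $Z \cap D = \emptyset$ forces $Z \cap E_J = \emptyset$, so the required inequality is vacuous for any sensible convention on the embedding dimension of the empty scheme.

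With the hypotheses of \corref{cor:snc-0} in place, its conclusion yields $\mu_Z(\sP) > 0$, which is precisely what is asserted. The only case not covered by this reduction is $m = 0$, which I would handle separately: by the paper's convention that a scheme of negative dimension is empty, $D$ itself must then be empty, and $X \cap H_f$ is automatically étale (hence smooth) whenever $H_f$ does not contain a component of $X$, so $\sP$ already has density $1$ essentially by inspection. I do not foresee any substantial obstacle in this approach; the only care needed is to confirm that the degenerate conventions (empty $Y$, empty $E_J$, $\edim$ of the empty scheme) are compatible with the statement of \corref{cor:snc-0}.
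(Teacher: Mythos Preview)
Your proposal is correct and follows exactly the paper's approach: the paper's proof is the single line ``Apply \corref{cor:snc-0} with $Y = \emptyset$.'' You have simply spelled out in detail why the hypotheses of \corref{cor:snc-0} are met under this specialization, which the paper leaves to the reader.
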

\begin{proof}
Apply \thmref{thm:snc-0} with $Y = \emptyset$.
\end{proof}

\begin{remk}\label{remk:snc-int}
  If $X \subset \P^n_k$ is closed and integral of dimension $d \ge 2$
  in \corref{cor:snc-0-dim},
  then $X \cap H_f$ will be necessarily integral for every $f \in \sP$.
  This can be easily deduced from the Enriques-Severi-Zariski vanishing theorem
  (e.g., see \cite[Lemma~5.1]{Ghosh-Krishna})
  and the fact that $H^0(X, \sO_X)$ is a field.
  \end{remk}

\begin{remk}\label{remk:snc-int-0}
  The reader may note that there are concrete situations (in the theory of algebraic
  cycles, for instance) where \corref{cor:snc-0-dim}
  is applicable. We often need to apply Bertini theorems to a given quasi-projective
  scheme $X$ and a divisor $D$ on it. It may happen that even if $X$ is smooth,
  $D$ may not be sncd everywhere but only along a proper open subset of $X$.
  In this case, \corref{cor:snc-0-dim} ensures existence of hypersurface sections
  of $X$ having similar properties.
  \end{remk}

\section{Bertini-integrality over finite fields}\label{sec:Irr-2}
Our goal in this section is to prove the remaining Bertini theorems
over finite fields: Bertini for irreducibility and integrality.
This requires additional work.

In \cite{Poonen-3}, Charles and Poonen proved a
Bertini-irreducibility theorem over finite fields.
In this paper, we shall prove a generalization of their result where we add an
additional constraint on the hypersurface
sections that they contain a prescribed closed subscheme $Z$ (satisfying certain
necessary conditions) of the underlying ambient scheme.
The Bertini theorems of this kind are very useful in algebraic geometry.
For instance, in the study of algebraic cycles and class field theory,
one looks for `good' hypersurfaces which contain a given algebraic
cycle on the underlying scheme.
We shall combine the Bertini-irreducibility theorem with \corref{cor:B-reduced-fin}
to prove our main result: the Bertini-integrality theorem.
We remark that the latter result is completely new, even when the prescribed closed
subscheme is empty.

Our proof  of the Bertini-irreducibility theorem 
closely follows the `$Z = \emptyset$' case shown in \cite{Poonen-3}.
However, some new steps and additional arguments are required
to take care of the presence of the prescribed closed subscheme.

\subsection{Some Lemmas}\label{sec:Lem*}
For a Noetherian scheme $X$, recall that $\irr (X)$ denotes the set of 
irreducible components of $X$. Let $X^{(i)} = \{x \in X| \dim(\sO_{X,x}) = i\}$.
We fix a finite field $k$ and an algebraic closure $\ov{k}$ of $k$.

\begin{lem}\label{lem:open}
Let $X$ be a subscheme of $\P^n_{\ov{k}}$. Let $U \subset X$ be a dense open subscheme.
Let $Z \subset \P^n_k$ be a closed subscheme such that
$Z_{\ov{k}} \cap X$ has codimension at least two in every irreducible component of
$X$. Then, for $f$ in a subset of $I^Z_{\homg}$ of density one, there is a bijection
$\irr (X_f) \to \irr (U_f)$ sending $D$ to $U_f \cap D$.
\end{lem}
\begin{proof}
  Let $T = \{x \in X \setminus U| \dim(\sO_{X,x}) = 1\}$.
  Since $U \subset X$ is dense, $T$ must be a finite set.
  It is also clear that  $T \cap Z_{\ov{k}} = \emptyset$.
Therefore, for any $t \in T$, we get that $\ov{\{t\}} \not\subset Z_{\ov{k}}$.
It follows from \lemref{lem:positive-dim} that
there is a subset $\sP_t \subset I^Z_{\homg}$ of density one, none of
whose elements vanishes on $\ov{\{t\}}$.
We let $\sP = {\underset{t \in T}\bigcap} \sP_t$ so that
$\mu_Z(\sP) = 1$ by \lemref{lem:Elem}. It is easy to check that
every $f \in \sP $ satisfies the desired property (e.g.,
see the proof of \cite[Lemma~3.3]{Poonen-3}).
\end{proof}

The following result generalizes a weaker version of
\cite[Lemma~3.5]{Poonen-3} to hypersurfaces containing 
a prescribed closed subscheme. But we will show that this weaker
version is sufficient for the proof of the Bertini-irreducibility theorem.

\begin{lem}\label{lem:Sing-finite}
Let $Y$ be a smooth irreducible subscheme of $\P^n_k$ of
dimension $m \ge 1$. Let $Z \subset \P^n_k$ be a closed subscheme such that
$Y \cap Z = \emptyset$. Let $X \in \irr (Y_{\ov{k}})$. Then, for
$f$ in a subset of $I^Z_{\homg}$ of density one, the scheme
$(X_f)_{\rm sing}$ is finite.
\end{lem}
\begin{proof}
Let $\sP = \{f \in I^Z_{\homg}| (Y_f)_{\rm sing} \ \mbox{is \ finite}\}$.
For $f \notin \sP$, we see that $(Y_f)_{\rm sing}$ has positive
dimension, and hence it contains closed points of arbitrarily high degrees.
It follows that $f$ is contained in the set
\[
\sQ^{\rm high} := {\underset{d \ge 1}\bigcup}
\{f \in I_{Z,d}|Y_f \ \mbox{contains \ a \ closed \ point} \ y \ \mbox{of} 
\ \mbox{degree} \ > \frac{d - c_Z}{m+1}
\]
\[
\hspace*{4cm} \mbox{such \ that} \
Y_f \ \mbox{is \ not \ smooth \ of \ dimension} \ m-1  \ \mbox{at} \ y\},
\]
where $c_Z$ is as in ~\eqref{eqn:surjective}. We conclude that
$\sP \cup \sQ^{\rm high} = I^Z_{\homg}$. On the other hand, \cite[Lemma~4.2]{Poonen-2}
says that $\ov{\mu}_Z(\sQ^{\rm high}) = 0$. In particular,
$\mu_Z(\sQ^{\rm high}) = 0$. We must therefore have
$\mu_Z(\sP) = 1$ (see the proof of \lemref{lem:Elem}).
It remains to show that $(X_f)_{\rm sing}$ is finite for every
$f \in \sP$. We fix an $f \in \sP$ and let $W = (Y_f)_{\rm sing}$. Then 
$W$ is finite and $Y_f \setminus W$ is smooth open in $Y_f$. 
In particular, $W_{\ov{k}}$ is finite and
$(Y_f)_{\ov{k}} \setminus W_{\ov{k}} =
(Y_f \setminus W)_{\ov{k}}$ is smooth.
If $X \in \irr (Y_{\ov{k}})$, then $X_f \setminus W_{\ov{k}}$ is
an open subset of $(Y_f)_{\ov{k}} \setminus W_{\ov{k}}$ and 
must therefore be smooth. 
\end{proof}

\subsection{The dimension two case}\label{sec:Dim-2}
The following is a version of Bertini-irreducibility 
theorem in dimension two. Let $k$ be a finite field and $\ov{k}$ an algebraic closure of $k$.

\begin{lem}\label{lem:Dim-2-B}
Let $X \subset \P^n_k$ be a closed integral subscheme of dimension two
and let $Z \subset \P^n_k$ be a closed subscheme such that $Z \cap X$
is finite. Then, for $f$ in a subset of $I^Z_{\homg}$ of density one, 
there is a bijection $\irr (X_{\ov{k}}) \xrightarrow{\simeq} 
\irr ((X_f)_{\ov{k}})$ which sends $D$ to $D \cap (X_f)_{\ov{k}}$.
\end{lem}
\begin{proof}
We shall prove this lemma using \cite[Proposition~4.1]{Poonen-3}.
We consider the commutative diagram
\begin{equation}\label{eqn:Dim-2-B-0}
\xymatrix@C.8pc{
H^0(\P^n_k, \sI_Z(d)) \ar[r]^-{\alpha_d} \ar[d]_-{\beta_d} &
H^0(X, \sI_Z \cdot \sO_X(d)) \ar[d]^-{\gamma_d} \\
H^0(\P^n_k, \sO_{\P^n_k}(d)) \ar[r]^-{\delta_d} &
H^0(X, \sO_X(d))} 
\end{equation}
for $d \ge 0$ obtained by the obvious restrictions and inclusions of sheaves.
The vertical arrows are injective for all $d$ and there exists $d_0 \gg 0$
such that the horizontal arrows are surjective for all $d \ge d_0$.

Define the subsets $S_{X, \homg} = 
{\underset{d \ge 1}\bigcup} H^0(X, \sO_X(d))$ and $I^Z_{X, \homg} = 
{\underset{d \ge 1}\bigcup} H^0(X, \sI_Z \cdot \sO_X(d))$ of
$\wt{S}_X = {\underset{d \ge 0}\bigoplus} H^0(X, \sO_X(d))$.
The diagram~\eqref{eqn:Dim-2-B-0} gives rise to a commutative diagram

\begin{equation}\label{eqn:Dim-2-B-1}
\xymatrix@C.8pc{
I^Z_{\homg} \ar[r]^-{\alpha} \ar[d]_-{\beta} & I^Z_{X, \homg} \ar[d]^-{\gamma} \\
S_{\homg} \ar[r]^-{\delta} & S_{X, \homg}}
\end{equation}
of sets in which the vertical arrows are injective and
the horizontal arrows are surjective in degrees $d \ge d_0$.

We define two new density functions $\mu'$ and $\mu'_Z$ on the
subsets of $S_{X, \homg}$ and $I^Z_{X, \homg}$ as follows.
Given $\sP_1 \subset S_{X, \homg}$ and $\sP_2 \subset I^Z_{X, \homg}$,
we let
\begin{equation}\label{eqn:Dim-2-B-2}
\mu'(\sP_1) = {\underset{d \to \infty}{\rm lim}} \frac{\#(\sP_1 \cap 
H^0(X, \sO_X(d)))}{\# H^0(X, \sO_X(d))} \ 
\mbox{and} \ 
\mu'_Z(\sP_2) = {\underset{d \to \infty}{\rm lim}} \frac{\#(\sP_2 \cap 
H^0(X, \sI_Z \cdot \sO_X(d)))}{\# H^0(X, \sI_Z \cdot \sO_X(d))},
\end{equation}
if the limits exist.

We consider the sets
$\sP = \{f \in I^Z_{\homg}| \irr (X_{\ov{k}}) \to \irr ((X_f)_{\ov{k}}) \
\mbox{is \ a \ bijection}\}$ and
$\sP' = \{f \in S_{\homg}| \irr (X_{\ov{k}}) \to \irr ((X_f)_{\ov{k}}) \
\mbox{is \ a \ bijection}\}$.
Then $\sP = \beta^{-1}(\sP')$.
Let $\sP''' = \delta(\sP')$ and $\sP'' = \gamma^{-1}(\sP''')$.
If there are $f, g \in S_{\homg}$ such that $g \in \sP'$ and
$\delta(f) = \delta(g)$, then both $f$ and $g$ must have the same degree
(say, $d$) unless $\delta(f) = \delta(g) = 0$. 
In the latter case, the equality $X_f = X_g$ is automatic.
In the former case, the exact sequence
\[
0 \to H^0(\P^n_k, \sI_X(d)) \to H^0(\P^n_k, \sO_{\P^n_k}(d)) 
\xrightarrow{\delta_d} H^0(X, \sO_X(d))
\]
implies that $f-g \in H^0(\P^n_k, \sI_X(d))$
so that $X_f = X_g$. This forces $f$ to also lie in $\sP'$.
It follows therefore that $\delta^{-1}(\sP''') = \sP'$.
In particular, $\alpha^{-1}(\sP'') = \sP$.

\cite[Proposition~4.1]{Poonen-3} says that $\mu(\sP') = 1$.
Since $\delta_d$ is surjective for $d\ge d_0$, we get
\[
\begin{array}{lll}
\mu'(\sP''') & = &  
{\underset{d \to \infty}{\rm lim}} \frac{\#(\sP''' \cap 
H^0(X, \sO_X(d)))}{\# H^0(X, \sO_X(d))} \\
& = &  {\underset{d \to \infty}{\rm lim}} \frac{\#
\delta^{-1}_d(\sP''' \cap 
H^0(X, \sO_X(d)))}{\# \delta^{-1}_d(H^0(X, \sO_X(d)))} \\
& = & {\underset{d \to \infty}{\rm lim}} \frac{\#(\sP' \cap S_d)}{\# S_d} \\
& = & \mu(\sP') = 1.
\end{array}
\]
The short exact sequence of sheaves
\[
0 \to \sI_Z \cdot \sO_X(d) \to \sO_X(d) \to \sO_{Z\cap X}(d) \to 0
\]
and the finiteness of $Z \cap X$ together imply that
there exists an integer $b \ge 1$ such that
$\# {\rm Coker}(\gamma_d) \le q^b$ for all $d \ge 1$.

Let $\epsilon > 0$ be given. Since $\mu'(\sP''') = 1$, 
there exists $d_1 \gg 1$ such that for all $d \ge d_1$, we have
$\frac{\#(\sP''' \cap H^0(X, \sO_X(d)))}{\# H^0(X, \sO_X(d))} > 
1 - \frac{\epsilon}{q^b}$.
Equivalently, 
$\frac{\#((\sP''')^c \cap H^0(X, \sO_X(d)))}{\# H^0(X, \sO_X(d))}
< \frac{\epsilon}{q^b}$, where $(\sP''')^c$ is the complement of
$\sP'''$ in $S_{X, \homg}$.

Since $(\sP'')^c \cap H^0(X, \sI_Z \cdot \sO_X(d)) =
\gamma^{-1}_d((\sP''')^c \cap H^0(X, \sO_X(d)))$ and $\gamma_d$ is
injective, we get
$\frac{\#((\sP'')^c \cap H^0(X, \sI_Z \cdot \sO_X(d)))}{\# H^0(X, \sO_X(d))}
< \frac{\epsilon}{q^b}$.
Since $\# H^0(X, \sO_X(d)) \le q^b (\# H^0(X, \sI_Z \cdot \sO_X(d)))$,
it follows that
$\frac{\#((\sP'')^c \cap H^0(X, \sI_Z \cdot \sO_X(d)))}
{\# H^0(X, \sI_Z \cdot \sO_X(d))} < \epsilon$. Equivalently, we get
\begin{equation}\label{eqn:Dim-2-B-3}
\frac{\#(\sP''\cap H^0(X, \sI_Z \cdot \sO_X(d)))}
{\# H^0(X, \sI_Z \cdot \sO_X(d))} > 1 - \epsilon \ \mbox{for \ all} \
d \ge d_1.
\end{equation}
This shows that $\mu'_Z(\sP'') = 1$.

To conclude, we note that $\alpha_d$ is surjective for all $d \ge d_0$
and we have shown that $\sP = \alpha^{-1}(\sP'')$.
This implies that
\[
\begin{array}{lll}
\mu_Z(\sP) & = & {\underset{d \to \infty}{\rm lim}}
\frac{\#(\sP \cap I_{Z,d})}{\# I_{Z,d}} = 
{\underset{d \to \infty}{\rm lim}} 
\frac{\# \alpha^{-1}_d(\sP'' \cap H^0(X, \sI_Z \cdot \sO_X(d)))}
{\# \alpha^{-1}_d(H^0(X, \sI_Z \cdot \sO_X(d)))} \\
& = & {\underset{d \to \infty}{\rm lim}} 
\frac{\# (\sP''\cap H^0(X, \sI_Z \cdot \sO_X(d)))}
{\# H^0(X, \sI_Z \cdot \sO_X(d))} =  \mu'_Z(\sP'') = 1.
\end{array}
\]
\end{proof}

\subsection{The general case}\label{sec:Final}
We need some lemmas to deduce the general case of the Bertini-irreducibility
theorem over finite fields from the case of surfaces. 
We let $k$ be a finite field and $\ov{k}$ an algebraic closure of $k$.

The following lemma
is a direct generalization of a weaker version of \cite[Lemma~5.3]{Poonen-3}
to hypersurfaces containing a prescribed closed subscheme.

\begin{lem}\label{lem:Reduction-1}
Let $Y$ be a smooth irreducible subscheme of $\P^n_k$
of pure dimension $m \ge 3$ and let $X \in \irr (Y_{\ov{k}})$. Let $Z \subset \P^n_k$
be a closed subscheme such that $Y \cap Z = \emptyset$ and
$\ov{Y} \cap Z$ has codimension at least two in $\ov{Y}$.
Then there exists a hypersurface $J \subset \P^n_k$ 
satisfying the following.

\begin{enumerate}
\item
$X \cap J_{\ov{k}}$
is irreducible of dimension $m-1$, $\dim(J_{\ov{k}} \cap (\ov{X} \setminus X))
\le m-2$ and $\dim(\ov{X} \cap  J_{\ov{k}} \cap Z_{\ov{k}}) \le m-3$.
\item
The subset 
$\sP = \{f \in I^Z_{\homg}| X_f \ \mbox{is \ irreducible \ or \ } \
X_f \cap J_{\ov{k}} \ \mbox{is \ reducible}\}$
of $I^Z_{\homg}$ has density one.
\end{enumerate}
\end{lem}
\begin{proof}
Let $\pi \colon \P^n_{\ov{k}} \to \P^n_k$ be the projection map.
We know that the Galois group ${\rm Gal}({\ov{k}}/{k})$ acts on $\P^n_{\ov{k}}$
and on the set of all its subsets. We can write
$\irr (Y_{\ov{k}}) = \{\sigma_1(X), \ldots , \sigma_r(X)\}$, where
$\sigma_i \in {\rm Gal}({\ov{k}}/{k})$ and $\sigma_1 = {\rm id}$.
Since $Y$ is smooth, all elements of $\irr (Y_{\ov{k}})$ are mutually
disjoint. It is also easy to see that $\pi^{-1}(\ov{Y}) =
\ov{(Y_{\ov{k}})}$. Indeed, if there is an open subset $U \subset \P^n_{\ov{k}}$ 
which meets $\pi^{-1}(\ov{Y})$ and does not meet $Y_{\ov{k}}$, then
$\pi(U)$ is an open subset of $\P^n_k$ which meets $\ov{Y}$ but not
$Y$. But this is not possible.
We therefore get 
$\irr ((\ov{Y})_{\ov{k}}) = 
\{\sigma_1(\ov{X}), \ldots , \sigma_r(\ov{X})\}$,
$(\ov{Y})_{\ov{k}} = \stackrel{r}{\underset{i =1}\cup}
\sigma_i(\ov{X})$ and $\pi^{-1}(\ov{Y} \cap Z) =
\stackrel{r}{\underset{i =1}\cup} (\sigma_i(\ov{X}) \cap Z_{\ov{k}})$.
This implies that $\dim(\ov{X}) = m$ and
$\dim(\ov{X} \cap Z_{\ov{k}}) = \dim(\ov{Y} \cap Z) \le m-2$.
The rest of the proof is identical to that of \cite[Lemma~5.3]{Poonen-3}, using \lemref{lem:positive-dim} instead of \cite[Lemma~3.1]{Poonen-3} and \lemref{lem:Sing-finite} instead of \cite[Lemma~3.5]{Poonen-3}.
\end{proof}

\begin{lem}\label{lem:base-change}
Let $Y \in \Sch_k$ be an integral scheme and let
$Y' \in \irr (Y_{\ov{k}})$. Then $Y'$ maps onto $Y$ under the
projection map $\pi \colon Y_{\ov{k}} \to Y$.
\end{lem}
\begin{proof}
  This follows easily from the proof of \lemref{lem:B-irr-inf}.
\end{proof}

\begin{lem}\label{lem:Reduction-2}
Let $X \subset \P^n_k$ be an irreducible subscheme of 
dimension $m \ge 2$. Let $Z \subset \P^n_k$ be a closed subscheme such that
$Z \cap \ov{X}$ has codimension at least two in $\ov{X}$. 
Then there is a subset $\sP \subset I^Z_{\homg}$ such that
$\mu_Z(\sP) = 1$ and every $f \in \sP$ defines 
a bijection $\irr (X_{\ov{k}}) \xrightarrow{\simeq} \irr ((X_f)_{\ov{k}})$ 
sending $D$ to $D \cap (X_f)_{\ov{k}}$. 
\end{lem}
\begin{proof}
We can assume $X$ to be reduced, and hence integral, in order to prove
the lemma. We shall prove the lemma by induction on $m$.
The base case follows easily from Lemmas~\ref{lem:open} and
~\ref{lem:Dim-2-B}. We can
therefore assume that $m \ge 3$.

Since $X_{\rm sm}$ is dense open in $X$, there is a bijection
$\irr (X_{\ov{k}}) \xrightarrow{\simeq} \irr ((X_{\rm sm})_{\ov{k}})$. It follows by
\lemref{lem:open} that for $f$ in a density one subset of
$I^Z_{\homg}$, there is a bijection $\irr ((X_f)_{\ov{k}}) \xrightarrow{\simeq}
\irr ((X_{\rm sm})_f)_{\ov{k}}$. We can therefore assume that $X$ is
integral and smooth.
Lastly, if $X' = X \setminus Z$, then $X_{\ov{k}} \setminus X'_{\ov{k}}$
has codimension at least two in $X_{\ov{k}}$. In particular,
$X_{\ov{k}} \setminus X'_{\ov{k}}$ does not contain any  element of
$\irr (X_{\ov{k}})$ and $((X \setminus X') \cap H_f)_{\ov{k}}$ 
does not contain any element of $\irr((X_f)_{\ov{k}})$. 
It suffices therefore to prove the lemma for $X'$.
We can therefore assume without loss of generality that $X$ is 
an integral and smooth subscheme of $\P^n_k$ such that $X \cap Z =
\emptyset$.
The rest of the proof is identical to that of \cite[Proposition~5.4]{Poonen-3},
using Lemmas~\ref{lem:Reduction-1} and ~\ref{lem:base-change} instead
of Lemma~5.3 of op. cit..
\end{proof}

We can now prove the main results of \S~\ref{sec:Irr-2}.

\begin{thm}\label{thm:B-irr-fin}
Let $k$ be a finite field and $X \subset \P^n_k$ a subscheme of dimension $m \ge 2$.
Let $Z \subset \P^n_k$ be a closed subscheme such that
$Z \cap \ov{X}$ has codimension at least two in $\ov{X}$.
Assume that $X$ is irreducible (resp. geometrically irreducible).
Let
\[
\sP = \{f \in I^{{Z}}_{\homg}| H_f \cap X \ \mbox{is \ irreducible} 
\ (\mbox{resp. \ geometrically \ irreducible})\}.
\]
Then $\mu_{{Z}}(\sP)  = 1$.
\end{thm}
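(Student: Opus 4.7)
The plan is to deduce this theorem as an almost immediate consequence of \lemref{lem:Reduction-2}, the main work having already been done there. The key additional input is tracking the Galois action carefully.

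First I would apply \lemref{lem:Reduction-2} to obtain a density-one subset $\sP_0 \subset I^Z_\homg$ such that for every $f \in \sP_0$, the assignment $D \mapsto D \cap (X_f)_{\ov{k}}$ defines a bijection $\Phi_f \colon \irr(X_{\ov{k}}) \xrightarrow{\simeq} \irr((X_f)_{\ov{k}})$. The next step is to verify that $\Phi_f$ is equivariant with respect to the action of $\Gal(\ov{k}/k)$: since $f$ has coefficients in $k$, the hypersurface $H_f$ is Galois-stable, hence $\sigma((X_f)_{\ov{k}}) = (X_f)_{\ov{k}}$ for every $\sigma \in \Gal(\ov{k}/k)$, from which one sees that $\sigma(D) \cap (X_f)_{\ov{k}} = \sigma\bigl(D \cap (X_f)_{\ov{k}}\bigr)$ for every $D \in \irr(X_{\ov{k}})$.

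For the geometrically irreducible case, the hypothesis gives $|\irr(X_{\ov{k}})| = 1$, so $|\irr((X_f)_{\ov{k}})| = 1$ for every $f \in \sP_0$, which means $(X_f)_{\ov{k}}$ is irreducible and therefore $X_f$ is geometrically irreducible. For the irreducible case, I would use the standard fact (valid since $k$ is perfect) that the irreducible components of $X$ are in bijection with the $\Gal(\ov{k}/k)$-orbits on $\irr(X_{\ov{k}})$, so irreducibility of $X$ is equivalent to transitivity of the Galois action on $\irr(X_{\ov{k}})$. By the equivariance of $\Phi_f$, the Galois action on $\irr((X_f)_{\ov{k}})$ is then also transitive, and hence $X_f$ is irreducible. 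Taking $\sP = \sP_0$ in either case gives $\mu_Z(\sP) = 1$.

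There is no real obstacle here beyond \lemref{lem:Reduction-2}; the only mild subtlety is that the bijection $\Phi_f$ provided by \lemref{lem:Reduction-2} needs to be checked to be Galois-equivariant, which follows from its concrete description $D \mapsto D \cap (X_f)_{\ov{k}}$ and the fact that $H_f$ is defined over $k$. Everything else is a formal consequence of the correspondence between $k$-irreducible components and Galois orbits on $\ov{k}$-irreducible components.
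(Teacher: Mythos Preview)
Your proposal is correct and follows essentially the same route as the paper. The paper likewise invokes \lemref{lem:Reduction-2} directly for the geometrically irreducible case, and for the merely irreducible case it cites the last part of the proof of \lemref{lem:B-irr-inf}, which is precisely the Galois-equivariance argument you spell out (phrased there by contradiction rather than via the orbit correspondence, but with the same content: the explicit bijection $D \mapsto D \cap (X_f)_{\ov{k}}$ commutes with ${\rm Gal}(\ov{k}/k)$ because $H_f$ is defined over $k$).
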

\begin{proof}
If $X$ is geometrically irreducible, then the theorem is an immediate consequence of
  \lemref{lem:Reduction-2}. We therefore have to consider the case when
  $X$ is irreducible but not necessarily geometrically irreducible.
   We fix an algebraic closure $\ov{k}$ of $k$.
We showed in \lemref{lem:Reduction-2} that there is a subset $\sP' \subset I^Z_{\homg}$
such that $\mu_Z(\sP') = 1$ and for every $f \in \sP'$, we have a bijection
$\irr (X_{\ov{k}}) \xrightarrow{\simeq} \irr ((X_f)_{\ov{k}})$.
It follows easily from \lemref{lem:base-change} (see the proof of \lemref{lem:B-irr-inf}) that $X_f$ must be irreducible
if $f \in \sP'$. This implies that $\mu_{{Z}}(\sP)  = 1$.
\end{proof}  

Let $k$ be a finite field and $X$ an integral subscheme of $\P^n_k$ of 
dimension $m \ge 2$.
Let $Z \subset \P^n_k$ be a closed subscheme 
such that $Z \cap \ov{X}$ has codimension at least two in $\ov{X}$.
Assume that $Z$ does not contain any generic point of $X_\sing$ and 
$Z \cap \Sigma^1_X = \emptyset$ (cf. ~\eqref{eqn:SR-locus}). Assume further that
$\tdim(Z\cap X_{\sm}) < \dim(X)$. Let
$T \subset \P^n_k$ be a finite set such that $T \cap Z = \emptyset$.
Let $\sP_{\rm int} \subset I^{{Z}}_{\homg}$ be the subset such that $f \in \sP_{\rm int}$
if and only if $T \cap H_f = \emptyset$, $X \cap H_f$ is integral and
$X_\sm \cap H_f$ is smooth.
We define $\sP_{\rm gint} \subset I^{{Z}}_{\homg}$ by replacing the
integrality condition in the definition of $\sP_{\rm int}$ by
geometric  integrality.

\begin{thm}\label{thm:B-int-fin}
Under the above assumptions, there exists $\sP' \subseteq \sP_{\rm int}$ such that $\mu_Z(\sP') > 0$.
If $X$ is geometrically integral, then there exists $\sP'' \subseteq \sP_{\rm gint}$ such that $\mu_Z(\sP'') > 0$.
\end{thm}
\begin{proof}
Combine \corref{cor:B-reduced-fin} (see its proof), \thmref{thm:B-irr-fin} and
  \lemref{lem:Elem}.
\end{proof}

\section{Bertini theorems over a dvr}\label{sec:dvr*}
We set up the notations which will be used throughout this section.
Let $A$ be a discrete valuation ring with maximal ideal 
$\fm = (\pi)$. Let $K$ denote the quotient field and $k$ the
residue field of $A$. Let $S = \Spec(A)$.
We let $S' = A[x_0, \ldots , x_n]$ so that $\P^n_A = \Proj_A(S') = \P_A(V)$,
where $V = Ax_0 \oplus \cdots \oplus Ax_n$ is a free $A$-module of rank $n+1$.
We let $S'_\eta = S' \otimes_A K$ and $S'_s = S' \otimes_A k$.
We define a hypersurface $H \subset \P^n_A$ of degree $d$ to be
a closed subscheme of the form $\Proj_A({S'}/{(f)})$, where
$f \in S'_d$ is a homogeneous polynomial of degree $d$ not all of whose
coefficients are in $\fm$. 
For $f \in S'$, we let $\ov{f}$ denote its image under the surjection
$S' \surj S'_s$. For any subscheme $Z \subset \P^n_A$, we  let
$Z_\eta$ (resp. $Z_s$) denote the generic (resp. special) fiber of $Z$ over $S$. 
If $Z \subset \P^n_A$ is closed, we let $\sI_Z$ denote the sheaf of ideals on
$\P^n_A$ defining $Z$. We define $\sI_{Z_\eta}$ and $\sI_{Z_s}$ similarly.
We let $I_s \subset S'_s$ be the homogeneous ideal
defining $Z_s$. 

\subsection{Specialization of hypersurfaces}\label{sec:Esp}
The goal of this subsection is to prove some technical results
which will allow us to
reduce Bertini theorems over $A$ to such results over the quotient and residue fields
of $A$.

For an integer $N \ge 1$, let ${\rm sp} \colon \P^N_K(K) \to \P^N_k(k)$
be the standard specialization map. This takes a $K$-rational point
$x$ to the restriction of the closure $\ov{\{x\}}$ in $\P^N_S$ to the
special fiber $\P^N_k$. Note that this map is well defined because
$\P^N_S$ is projective over $S$. In precise terms, this map is defined
as follows. Let $x = [a_0, \ldots , a_N] \in \P^N_K(K)$. We let
$l = {\rm min}_{0 \le i \le N} \ v(a_i)$, where $v \colon K \to \Z$ is the
normalized discrete valuation with valuation ring $A$. Then
$a'_i := \pi^{-l}a_i \in A$ and not all of them lie in $\fm$. It is clear that
${\rm sp}(x) = [\ov{a'_0},  \ldots , \ov{a'_N}] \in \P^N_k(k)$.
The following is elementary.

\begin{lem}\label{lem:Rational}
  If $x \in \P^N_K$ is a closed point such that $\ov{\{x\}} \cap \P^N_k = \{x'\}$
  with $x' \in \P^N_k(k)$, then $x \in \P^N_K(K)$. The map ${\rm sp}$ 
is surjective.
\end{lem}
\begin{proof}
  The second part can be checked directly. Since the projection map
  $\ov{\{x\}} \to S$ is finite and dominant, the first part follows
  from the general commutative algebra statement that
if $f \colon A \to A'$ is a finite and injective homomorphism 
between Noetherian rings (with $A$ as above) such that the
induced map $k \to A' \otimes_A k$ is an isomorphism, then $f$ is an
isomorphism. This statement, in turn, is easily deduced from Nakayama's lemma (e.g.,
see \cite[Theorem~2.2]{Matsumura}).
\end{proof}

\begin{lem}\label{lem:Non-empty-sp}
Given any point $x \in \P^N_k(k)$ and nonempty open subset
$U \subset \P^N_K$, the intersection ${\rm sp}^{-1}(x) \cap U(K)$
is infinite.
\end{lem}
\begin{proof}
Let $x = [\ov{a_0}, \ldots , \ov{a_N}]$, where $a_i \in A$ for every $i$
and $a_i \in A^\times$ for some $i$. We assume that $a_0 \in A^\times$
as the arguments for all cases are identical.
We let $W:= {\rm sp}^{-1}(x) \cap U(K)$. As $U \subset \P^N_K$ is dense open, 
${\rm sp}^{-1}(x) \setminus W$ is contained in a hypersurface
$H_f \subset \P^N_K$ for some nonzero homogeneous polynomial
$f \in K[x_0, \ldots , x_N]$. We let $g(x_1, \ldots , x_N) =
f(a_0, a_1 + \pi x_1, \ldots , a_n + \pi x_N) \in K[x_1, \ldots , x_N]$.
It is clear that $g$ is a nonzero polynomial.

Now, there is an inclusion $A^N \subset {\rm sp}^{-1}(x)$, where
an element $(c_1, \ldots , c_N) \in A^N$ is identified with
the point $[a_0, a_1 + \pi c_1, \ldots , a_N + \pi c_N] \in {\rm sp}^{-1}(x)$.
Under this inclusion, we see that $g$ vanishes on $A^N \setminus W$.
But this forces $W$ to be infinite by \lemref{lem:vanishing-locus}
(where we take $I_1 = \cdots = I_N = A \subset K$).
\end{proof}

\begin{lem}\label{lem:vanishing-locus}
Let $h \in K[x_1, \ldots , x_N]$ be a nonzero polynomial and let
$I_1, \ldots , I_N$ be infinite subsets of $K$.
Let $W \subset I_1 \times \cdots \times I_N$ be a finite set.
Then $h$ can not vanish everywhere on $(I_1 \times \cdots \times I_N)
\setminus W$. 
\end{lem}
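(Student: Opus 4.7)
The plan is to prove this by induction on the number of variables $N$, using the classical fact that a nonzero univariate polynomial over a field has only finitely many roots.

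For the base case $N = 1$, a nonzero $h \in K[x_1]$ has only finitely many zeros, say $d = \deg(h)$ of them. Since $I_1$ is infinite and $W$ is finite, the set $I_1 \setminus (W \cup \{\text{roots of } h\})$ is nonempty, giving a point where $h$ does not vanish.

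For the inductive step, write $h = \sum_{i=0}^{d} h_i(x_1, \ldots, x_{N-1}) \, x_N^i$ with $h_d \neq 0$ in $K[x_1, \ldots, x_{N-1}]$. Let $\pi \colon I_1 \times \cdots \times I_N \to I_1 \times \cdots \times I_{N-1}$ denote the projection onto the first $N-1$ coordinates, and set $W' = \pi(W)$, which is still finite. For each tuple $(a_1, \ldots, a_{N-1}) \in I_1 \times \cdots \times I_{N-1}$, the fiber $\pi^{-1}(a_1, \ldots, a_{N-1}) \cap W$ is finite (being a subset of the finite set $W$), so the set $F(a_1, \ldots, a_{N-1}) \subset I_N$ of $N$-th coordinates appearing in $W$ above $(a_1, \ldots, a_{N-1})$ is finite.

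By the inductive hypothesis applied to $h_d$ with the finite exceptional set $W'$, there exists $(a_1, \ldots, a_{N-1}) \in (I_1 \times \cdots \times I_{N-1}) \setminus W'$ such that $h_d(a_1, \ldots, a_{N-1}) \neq 0$. For this choice, $h(a_1, \ldots, a_{N-1}, x_N) \in K[x_N]$ is a nonzero polynomial of degree $d$, hence has at most $d$ roots in $K$. Since $I_N$ is infinite and both the root set and $F(a_1, \ldots, a_{N-1})$ are finite, we can choose $a_N \in I_N$ avoiding both. Then $(a_1, \ldots, a_N) \in (I_1 \times \cdots \times I_N) \setminus W$ and $h(a_1, \ldots, a_N) \neq 0$, completing the induction. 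There is no serious obstacle here; the only point requiring a little care is feeding the finite set $W'$ into the inductive hypothesis so that we can simultaneously avoid $W$ and ensure the leading coefficient is nonzero.
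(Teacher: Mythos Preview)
Your proof is correct and follows essentially the same induction-on-$N$ strategy as the paper, using the leading coefficient in one variable and the finiteness of roots of a univariate polynomial. The only cosmetic difference is that the paper first proves the $W=\emptyset$ case by induction and then reduces the general case to it by shrinking one factor $I_j$, whereas you carry the finite exceptional set $W$ through the induction directly; note also that since you chose $(a_1,\ldots,a_{N-1})\notin W'=\pi(W)$, the fiber $F(a_1,\ldots,a_{N-1})$ is automatically empty, so that part of your last step is redundant.
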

\begin{proof}
  If $h$ vanishes on $(I_1 \times \cdots \times I_N) \setminus W$, then
  it will vanish  everywhere on 
  $I'_1 \times I_2 \times \cdots \times I_N$, where $I'_1$ is the complement of
  the projection of $W$ on $I_1$. We can thus reduce the lemma to the case when
  $W = \emptyset$.
  This latter case is an easy exercise using induction on $N$. 
\end{proof}

We now let $Z \subset \P^n_A$ be a closed subscheme defined by a
homogeneous ideal $I \subset S'$ such that $Z$ is flat over $S$
(e.g., $Z$ is reduced and none of its irreducible components lie in $\P^n_k$).

\begin{lem}\label{lem:Non-empty-sp-Z} 
For all $d>>0$, and for any nonzero homogeneous polynomial $f \in I_{s}$ of degree $d$, the set ${\rm sp}^{-1}(H_f) \cap U(K)$ is infinite for any nonempty open subset $U \subset \P_K(H^0(\P^n_K, \sI_{Z_\eta}(d)))$.
\end{lem}
\begin{proof}
  Since $Z$ is flat over $S$, the canonical map
  $\sI_Z \otimes_A k \to \sI_{Z_s}$ of coherent sheaves on $\P^n_A$ is an isomorphism.
  In particular, the canonical homomorphism
  $H^0(\P^n_A, \sI_Z(d))\otimes_A k \to H^0(\P^n_k, \sI_{Z_s}(d))$ is an isomorphism for
  all $d \gg 0$. Equivalently,  under the structure map
$\phi_{Z,d} \colon \P_A(H^0(\P^n_A, \sI_Z(d))) \to S$, the special fiber coincides
with $\P_k(H^0(\P^n_k, \sI_{Z_s}(d)))$ for all $d \gg 0$.
Applying \lemref{lem:Non-empty-sp} to $\phi_{Z,d}$, we conclude the proof.
\end{proof}

In the rest of this section, we shall combine the above results with
the Bertini theorems over fields to prove analogous theorems
over $A$.

\subsection{Bertini-regularity over $S$}\label{sec:B-reg**}
We shall prove our Bertini theorems over $A$ under the following assumptions.

\begin{notat}\label{notat:BT-13}
  Let $\sX \inj \P^n_S$ be a equidimensional connected quasi-projective scheme over $S$ whose
  every irreducible component has dimension at least two.
Let $\phi \colon \sX \to S$ be the structure map.
Assume that $\phi$ is surjective.
Let $\sX_{\eta} = \sX \times_S \{\eta\}, \ \sX_s = \sX \times_S \{s\}$ and
$X = (\sX_s)_\red$. Let $\ov{\sX}$ denote the scheme-theoretic
closure of $\sX$ in $ \P^n_S$.
\end{notat}

The first main result of this section is the following.
This was shown earlier in \cite[Theorem~1]{JS} when $X$ is a sncd on $\sX$, $\sX$ is regular and flat over $A$, and
$k$ is either infinite or (a weaker version of) $A$ is strict Henselian,
in \cite[Theorem~4.2]{SS} when $X$ is a sncd on $\sX$, its irreducible
components are smooth over $k$, and $\sX$ is regular, projective and flat over $A$, and in \cite[Proposition~2.3]{BK} when
$k$ is infinite and perfect and $\sX$ is regular, projective and flat over $A$.

\begin{thm}\label{thm:B-base-reg}
  In the situation of \notbref{notat:BT-13}, there exists an integer $d_0 \gg 0$ such that for
  all $d \ge d_0$, we can find infinitely many hypersurfaces $H \subset \P^n_S$ of degree
$d$ for which $\sX_\reg \cap H$ is regular. If the generic fiber
of $\sX_\reg$ is smooth, then so is the generic fiber of $\sX_\reg \cap H$.
\end{thm}
\begin{proof}
We can replace $\sX$ by $\ov{\sX}$ to prove the theorem. We therefore
assume that $\sX$ is projective over $S$. We can also assume that
$\sX_\reg \neq \emptyset$ because there is nothing to prove otherwise.
We shall now prove the theorem in the following steps.

\vskip .2cm

{\bf{Step~1:}}
Suppose first that $\sX_\reg \subset \sX_\eta$ so that $\sX_\reg = (\sX_\eta)_\reg$.
We can then apply \lemref{lem:B-regularity} to get a dense
open subscheme $\sU \subset \P_K(S'_{\eta,d})$ for every $d \ge 1$
such that all $H \in \sU(K)$ have the property that 
$H \cap \sX_\reg$ is regular. We let $\sU' \subset \P_A(S'_d)$ be
the complement of the Zariski closure of $\P_K(S'_d) \setminus \sU$ in $\P_A(S'_d)$.
It is then clear that
$H \cap \sX_\reg$ is regular for every $H \in \sU'(S)$.
If $\sX_\reg$ is smooth, then $\sX_\reg \cap H$ is smooth
by \cite[Theorem~1]{KA}.

\vskip .2cm

{\bf{Step~2:}}
Suppose now that $\phi \colon \sX_\reg \to S$ is surjective.
Since $(\sX_\eta)_\reg = \sX_\reg \cap \sX_\eta$, \lemref{lem:B-regularity} again
says that there is a dense open subscheme $\sU \subset \P_K(S'_{\eta,d})$ 
for every $d \ge 1$ such that every $H_\eta \in \sU(K)$ has the property 
that $\sX_\reg \cap H_\eta$ is regular. If $\sX_\reg \cap \sX_\eta$ 
is smooth, then $\sX_\reg \cap H_\eta$ is moreover smooth by 
\cite[Theorem~1]{KA}.

For $d \ge 1$, we let $F_d \subset |H^0(\P^n_k, \sO_{\P^n_k}(d))|(k)$ be the subset
consisting of
hypersurfaces $H \subset \P^n_k$ having the property that if $f \in S'_{s,d}$ is the
defining homogeneous polynomial of $H$, then the image of $f$ in $\sO_{X,x}$ is not in
$\fm^2_{X, x}$ for any closed point $x \in X$.

\vskip .2cm

{\bf{Claim:}} $|F_d| > 0$ for all $d \gg 0$. 

\vskip .2cm

The claim is a direct consequence of \corref{cor:PW-sing} 
when $k$ is finite. So we assume that $k$ is infinite.
We can write $X$ as a 
disjoint union of irreducible subschemes 
$X = \stackrel{r}{\underset{i = 1}\amalg} U_i$ of $\P^n_k$
such that each $U_i$ is regular. By \lemref{lem:B-regularity}, we can find a 
dense open subscheme $\sU' \subset |H^0(\P^n_k, \sO_{\P^n_k}(d))|$
for every  $d \ge 1$ such that for all $H \in \sU'(k)$ and $1 \le i \le r$,
one has that $H \cap U_i$ is regular and has codimension one in $U_i$.

Let $H \in \sU'(k)$ and let it be defined by the homogeneous
polynomial $f \in S'_{s,d}$.
If $x \in U_i$ is a closed point of $X$, then it is clear that
the image of $f$ in $\sO_{U_i,x}$ can not lie in
$\fm^2_{U_i,x}$. This implies that the image of $f$ in $\sO_{X,x}$ can not lie
in $\fm^2_{X, x}$. Note that $\sU'(k)$ is infinite because $k$ is infinite and
$\sU'$ is a rational $k$-variety. Since $\sU'(k) \subset F_d$, the claim follows.

\vskip .2cm

Let $\sU \subset \P_K(S'_{\eta,d})$ be the open subscheme chosen in the
beginning of Step~2 and set 
$F'_d = {\rm sp}^{-1}(F_d) \cap \sU(K) \subset \P_K(S'_{\eta,d})(K)$.
\lemref{lem:Non-empty-sp} says that $F'_d$ is infinite.
Given any $H_\eta \in F'_d$, we let $H$ be its Zariski closure in 
$\P_A(S'_{d})$. Then $H \in \P_A(S'_d)(S)$ so that it is defined
by a homogeneous polynomial $f \in S'_d$. We let $\sX' = \sX \cap H$.

\vskip .2cm

{\bf{Step~3:}}
We now show that $\sX'$ satisfies the
properties asserted in the theorem. Since we already showed this in Step~2
for $\sX'_\eta$, we only need to show that
$\sX'_{\sing} \cap \sX_\reg \cap X = \emptyset$.

By \cite[Exercise~8.2.17, Corollary~8.2.38]{QL}, $\sX'_\sing \cap \sX_\reg \cap X$
is a closed subset of $\sX' \cap \sX_\reg \cap X = \sX_\reg \cap H \cap X$.
Since the latter is an open subset of the projective scheme $H \cap X$ over $k$,
one deduces that if $\sX'_\sing \cap \sX_\reg \cap X \neq \emptyset$, then it
must contain a point which is closed in $X$.
It suffices therefore to show that $\sX'$ is regular at every closed point
of $X$ lying in $\sX_\reg \cap H$. Equivalently, we need to show that
$f \notin \fm^2_{\sX, x}$ for every  closed point
$x \in X$ lying in $\sX_\reg\cap H$. But this is clear.
This concludes the proof of the theorem.
\end{proof}

The following is a variant of  \thmref{thm:B-base-reg} where
the hypersurfaces are required to contain a prescribed closed subscheme of $\sX$
with some necessary conditions.
Let $\sX \subset \P^n_S$ be as in \thmref{thm:B-base-reg} and let
$Z \subset \P^n_S$ be a closed subscheme whose no irreducible component lies in
$\P^n_k$.
Assume furthermore that the following are satisfied. 
\begin{enumerate}
\item
  $\sX_s$ is reduced (this happens, for instance, when $\sX$ is smooth over $S$).
\item
  $Z \cap \ov{\sX}$ is a reduced scheme which is finite and flat over $S$.
\item
  $Z \cap \ov{\sX} \subset \sX_\reg$ and $Z \cap \ov{X} \subset X_\reg$.
\end{enumerate}

\begin{thm}\label{thm:B-base-closed}
Under the above conditions, there exists an integer $d_0 \gg 0$ such that for all
$d \ge d_0$, we can find infinitely many hypersurfaces $H \subset \P^n_S$ of degree
$d$ containing $Z$ for which $\sX_\reg \cap H$ is regular. 
If the generic fiber
of $\sX_\reg$ is smooth, then so is the generic fiber of $\sX_\reg \cap H$.
\end{thm}
\begin{proof}
The proof is completely identical to that
  of \thmref{thm:B-base-reg} modulo the modification that we use
 \propref{prop:B-regularity-*} in place of \lemref{lem:B-regularity}
over the quotient field (and the residue field if it is infinite) of $A$, 
and  use \lemref{lem:Non-empty-sp-Z} in place of \lemref{lem:Non-empty-sp}. 
We need to directly use \thmref{thm:PW-sing-gen} instead of its special case
\corref{cor:PW-sing}.
\end{proof}

\subsection{Bertini for the $(R_a + S_b)$-property over $S$}
\label{sec:B-red*}
The following result extends Theorems~\ref{thm:Bertini-reg-0} and 
~\ref{thm:R-S-main} to schemes over $S$.

\begin{thm}\label{thm:B-Bertini-red}
In the situation of \notbref{notat:BT-13}, assume further that $\sX$ is generically reduced.
Then there exists an integer $d_0 \gg 0$ such that for all $d \ge d_0$,
we can find infinitely many hypersurfaces $H \subset \P^n_S$ of degree
$d$ for which $\sY := \sX \cap H$ satisfies the following.
\begin{enumerate}
\item
The structure map $\sY \to S$ is surjective.
\item
$\sY$ is an effective Cartier divisor on $\sX$. 
\item
$\sY$ does not contain any irreducible component of $\sX_\sing$.
\item
$\sY \cap \sX_s$ is an effective Cartier divisor on $\sX_s$.
\item
  $\sY \cap \sX_\reg$ is regular.
\item
If the generic fiber of $\sX_\reg$ is smooth, 
then so is the generic fiber of  $\sY \cap \sX_\reg$.
\item
If $\sX$ is an $(R_a + S_b)$-scheme for some $a, b \ge 0$, then so is $\sY$.
\item
  If $\sX_\eta$ is irreducible of dimension $m \ge 2$, then $\sY_\eta$ is irreducible.
\end{enumerate}
\end{thm}
\begin{proof}
We shall prove the theorem in several steps.

\vskip .2cm

{\bf{Step~1:}}
Our assumptions imply that the generic and special fibers of $\sX$ are positive dimensional.
Let $\Sigma^b_{\sX} \subset \sX$ be as in ~\eqref{eqn:SR-locus}. 
It follows from \corref{cor:Cat-exm} that $\Sigma^b_{\sX}$ is a finite set if
$\sX$ is an $S_b$-scheme.
Since $\sX$ is generically reduced, $\sX_\sing$ is nowhere dense in $\sX$.
We let $W_0 \subset \sX$ be the set of generic points of the 
following closed subsets of $\sX$:
\begin{listabc}
\item
Irreducible and embedded components of $\sX$.
\item
Irreducible and embedded components of $\sX_s$.
\item
Irreducible components of $\sX_\sing$. 
\end{listabc}
We write $W = W_0 \cup \left(\underset{b | \sX \text{ is } S_b}{\cup} \Sigma^b_{\sX}\right) = (W \cap \sX_\eta) \amalg (W \cap X)$.

\vskip .2cm

{\bf{Step~2:}}
It follows from \thmref{thm:Bertini-reg-0} that there exists
an integer $d_1 \gg 0$ such that for all $d \ge d_1$, we can find a
dense open subscheme $\sU_{1} \subset |H^0(\P^n_K, \sO_{\P^n_K}(d))|$ so that every
$H_\eta \in \sU_{1}(K)$ has the property that it does not meet $W \cap \sX_\eta$,
$\sX_\reg \cap H_\eta$ is regular and $\sX_\eta \cap H_\eta$ is
an $(R_a + S_b)$-scheme if $\sX$ (hence $\sX_\eta$) is so. If $\sX_\reg \cap \sX_\eta$ 
is smooth, then $\sX_\reg \cap H_\eta$ is moreover smooth by 
\cite[Theorem~1]{KA}. 
Since $\dim(\sX_\eta)>0$, \lemref{lem:intersection} below
says that there is a dense open subscheme
$\sU_{2} \subset |H^0(\P^n_K, \sO_{\P^n_K}(d))|$ so that every $H_\eta \in \sU_{2}(K)$
has the property that $H_\eta\cap\sX_{\eta}\neq \emptyset$. Let $\sU=\sU_1\cap\sU_2$.

\vskip .2cm

{\bf{Step~3:}}
If $\sX_\eta$ is irreducible of dimension $m \ge 2$,
it follows from \lemref{lem:B-irr-inf} that there
exists an integer $d'_1 \gg 0$ such that for all $d \ge d'_1$, we can find a
dense open subscheme $\sU' \subset |H^0(\P^n_K, \sO_{\P^n_K}(d))|$ so 
that every $H_\eta \in \sU'(K)$ has the property that
$\sX_\eta \cap H_\eta$ is irreducible.
In the rest of the proof, we shall replace $d_1$ by 
${\rm max}(d_1, d'_1)$ and $\sU$ by $\sU \cap \sU'$ if
$\sX_\eta$ is irreducible of dimension $m \ge 2$.

\vskip .2cm

{\bf{Step~4:}}
If $k$ is infinite, we proved in \thmref{thm:B-base-reg} that there is an
integer $d_2$ such that for all $d\ge d_2$, there is a dense open subscheme
$\sU_3\subset |H^0(\P^n_k, \sO_{\P^n_k}(d))|$ so that every $H_{s}\in \sU_3(k)$ has the
property that if $f \in S'_{s,d}$ is the defining homogeneous polynomial of $H_{s}$, 
then image of $f$ in $\sO_{X,x}$ is not in $\fm^2_{X, x}$ 
for every closed point $x \in X$. Furthermore,  by \lemref{lem:good}
(with $Z=\emptyset$ and $T=(W\cap X)\cup \Delta(X)$)
and \lemref{lem:intersection}, we can find an integer $d_3$ such that for all
$d\ge d_3$, there is a dense open subscheme
$\sU_4\subset |H^0(\P^n_k, \sO_{\P^n_k}(d))|$ so that every $H_{s}\in \sU_4(k)$ has the
property that $H_s\cap X\cap W=\emptyset$, and
$H_s\cap X\neq\emptyset$ if $\dim(X) > 0$.
Take $d\ge d_4={\rm max}(d_2,d_3)$ and $F_d=\sU_3(k)\cap\sU_4(k)$. Then
$F_d \neq \emptyset$.

\vskip .2cm

{\bf{Step~5:}}
If $k$ is finite, we let $Y$ be the finite closed subscheme of $X$ consisting of
closed points in $W\cap X$ with reduced induced subscheme structure and let
$T=\prod_{P\in Y} (\sO_{X,P}/\fm_{X,P}\setminus \{0\}) \subset H^{0}(Y,\sO_{Y})$. Then
Lemma~\ref{lem:Elem} and \corref{cor:PW-sing}, in combination with
\cite[Lemmas~3.1, 3.2]{Poonen-3}, imply that
we can find a subset $\sP\subset S'_s$ of positive density
such that every $f\in \sP$ satisfies the properties that $f|_Y\in T$, $H_f$ does not
meet $(W\cap X)\setminus Y$,
$H_{f}\cap X\neq \emptyset$ if $\dim(X) > 0$
and $f\notin\fm^2_{X,x}$ for every closed point $x\in X\setminus Y$.
But we can then conclude that $H_{f}\cap Y=\emptyset$.
So, $H_{f}\cap W\cap X=\emptyset$ and $f\notin\fm^2_{X,x}$ for every closed point
$x\in X$. It follows that there is an integer $d'_4\gg 0$ such that for every
$d\ge d'_4$, $F_d\coloneq S'_{s,d}\cap\sP\neq\emptyset$.

\vskip .2cm

{\bf{Step~6:}}
We let
\[
  d_0 = \left\{
    \begin{array}{ll}
      \max(d_1, d_4) & \mbox{if $|k| = \infty$} \\
       \max(d_1, d'_4) & \mbox{if $|k| < \infty$.}
    \end{array}
  \right.
\]
Let $d \ge d_0$ and $H_s \in F_d$. It follows from \lemref{lem:Non-empty-sp} that 
${\rm sp}^{-1}(H_s) \cap \sU(K)$ is infinite.
Let $H_\eta \in {\rm sp}^{-1}(H_s) \cap \sU(K)$ be any hypersurface and
let $H \in \P_A(S'_{d})$ be the unique 
hypersurface such that $H_\eta = H \cap \P^n_K$.
We shall show that $\sX \cap H$ satisfies the properties (1) $\sim$ (8)
asserted in the theorem.

Since $\sY \cap \sX_\eta = H_\eta \cap \sX_\eta$ and
$\sY \cap X = H_s \cap X$, it follows from our choice of 
$H_s$ and $H_\eta$ that $\sY \cap \sX_\eta \neq \emptyset$, and $\sY \cap X \neq \emptyset$. This proves (1).  
Since $H \cap W_0 = \emptyset$, the properties (2), (3) and (4) are immediate.
The properties (5) and (6) were proven in \thmref{thm:B-base-reg}, given our choice of
$H$. Using (2), (5) and the fact that $\sY \cap \Sigma^b_\sX = \emptyset$,
the proof of (7) becomes identical to the one given (for the field case) in
the proofs of Theorems~\ref{thm:Bertini-reg-0} and ~\ref{thm:R-S-main}.
The property (8) is clear from the refined choice of $\sU$
if $\sX_\eta$ is irreducible of dimension $m \ge 2$ (see Step~3).
\end{proof}

\begin{lem}\label{lem:intersection}
  Let $K$ be an infinite field and let $X$ be a subscheme of $\P^n_K$ such that
  $\dim (X)>0$. Then for any $d>0$, there is a dense open subscheme
  $\sU_d\subset \P(H^0(\P^n_K,\sO(d)))$ such that $H\cap X\neq \emptyset$
for every $H\in\sU(K)$.
\end{lem}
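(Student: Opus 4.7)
To prove the lemma, I would reduce to an irreducible piece of $X$ and then use an incidence-variety argument. Since $\dim X > 0$, one may choose an irreducible component $X_0 \subset X$ with $\dim X_0 \ge 1$; as $H\cap X_0 \neq \emptyset$ forces $H\cap X \neq \emptyset$, it suffices to prove the lemma with $X_0$ in place of $X$. Let $\bar{X_0}$ denote the scheme-theoretic closure of $X_0$ in $\P^n_K$; this is an irreducible projective $K$-subscheme of positive dimension.

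The main object will be the incidence variety
\[
\sZ \ := \ \bigl\{(H,x) \in \P(H^0(\P^n_K,\sO(d))) \times \bar{X_0} \ : \ x \in H\bigr\},
\]
together with its two projections $\pi_1,\pi_2$. The fiber of $\pi_2$ over any point of $\bar{X_0}$ is the hyperplane in $\P(H^0(\P^n_K,\sO(d)))$ cut out by the linear condition of passing through that point, so $\pi_2$ realizes $\sZ$ as a Zariski-locally trivial projective bundle over $\bar{X_0}$; in particular $\sZ$ is irreducible. The preimage $\sZ' := \pi_2^{-1}(X_0)$ is a nonempty open subscheme of the irreducible scheme $\sZ$, hence is dense in $\sZ$.

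Next, the projection $\pi_1 \colon \sZ \to \P(H^0(\P^n_K,\sO(d)))$ is proper, since $\sZ$ is closed in the product $\P(H^0(\P^n_K,\sO(d))) \times \bar{X_0}$ and $\bar{X_0}$ is projective over $K$. It is also surjective, because $\bar{X_0}$ has positive dimension and therefore meets every hypersurface of $\P^n_K$. Combining these facts with continuity of $\pi_1$ and $\overline{\sZ'}=\sZ$ yields
\[
\P(H^0(\P^n_K,\sO(d))) \ = \ \pi_1(\sZ) \ = \ \pi_1(\overline{\sZ'}) \ \subset \ \overline{\pi_1(\sZ')},
\]
so $\pi_1(\sZ')$ is dense in $\P(H^0(\P^n_K,\sO(d)))$. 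By Chevalley's theorem $\pi_1(\sZ')$ is constructible, and hence it contains a nonempty open (necessarily dense) subscheme $\sU_d \subset \P(H^0(\P^n_K,\sO(d)))$. For any $H \in \sU_d(K)$ one then finds some $x \in X_0$ with $x \in H$, giving $H \cap X_0 \neq \emptyset$ and thus $H \cap X \neq \emptyset$.

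The only subtle step is the density of $\pi_1(\sZ')$, which I deduce from the surjectivity and properness of $\pi_1$ together with the fact that $\sZ'$ is dense in the irreducible variety $\sZ$. The assumption that $K$ is infinite enters only to guarantee that the dense open subscheme $\sU_d$ has $K$-rational points, so that the conclusion is not vacuous.
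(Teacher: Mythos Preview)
Your proof is correct but follows a genuinely different route from the paper. The paper first reduces to the case where $X$ is a reduced curve, takes its closure $\ov{X}\subset\P^n_K$, observes that the boundary $W=\ov{X}\setminus X$ is a finite set of closed points, and lets $\sU$ be the complement of the finitely many hyperplanes $\sV_P\subset \P(H^0(\P^n_K,\sO(d)))$ of hypersurfaces through the points $P\in W$; any $H\in\sU(K)$ then meets $\ov{X}$ (a closed positive-dimensional subscheme of $\P^n_K$) but avoids $W$, hence meets $X$. Your argument instead works directly with an irreducible component of arbitrary positive dimension, builds the incidence correspondence, and uses properness plus Chevalley to extract a dense open from the constructible image. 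The paper's approach is more elementary and gives an explicit description of $\sU$ as the complement of finitely many hyperplanes, at the cost of the preliminary reduction to curves; your approach avoids that reduction and is more uniform, but invokes heavier tools (irreducibility of the incidence bundle and Chevalley's theorem). Both ultimately rely on the same basic fact that a closed positive-dimensional subscheme of $\P^n_K$ meets every hypersurface.
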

\begin{proof}
  Since $\dim(X)>0$, it has a subscheme $Y$ such that $Y$ is an integral curve.
  Now if $H\cap Y\neq\emptyset$, then $H\cap X\neq\emptyset$.
  So without loss of generality, we can assume that $X$ is an integral curve.
  Then $W=\ov{X}\setminus X$ is a finite set of closed points. For a point $P\in W$,
  let $\sV_P$ be the closed subscheme of $\P(H^0(\P^n_K,\sO(d)))$
  whose $K$-rational points are the degree $d$ hypersurfaces passing through $P$.
  Let $\sV=\bigcup_{P\in W}V_P$. Then $\sV$ is a proper closed subset of
  $\P(H^0(\P^n_K,\sO(d)))$. Let $\sU=\P(H^0(\P^n_K,\sO(d)))\setminus \sV$. Then for
  any $H\in \sU(K)$, $H$ does not pass through any point of $W$. However, since
  $\ov{X}$ is a closed curve in $\P^n_K$, $H\cap \ov{X}\neq \emptyset$. This implies
  that $H\cap X\neq\emptyset$.
  \end{proof}

\begin{cor}$($Bertini for reducedness$)$\label{cor:B-base-red}
  In the situation of \notbref{notat:BT-13}, assume further that $\sX$ is reduced.
  Then there exists an integer $d_0 \gg 0$ 
such that for all $d \ge d_0$, we can find infinitely many hypersurfaces 
$H \subset \P^n_S$ of degree $d$ for which $\sX \cap H$ is reduced.
\end{cor}
\begin{proof}
Apply \thmref{thm:B-Bertini-red} (7) with $(a,b) = (0,1)$.
\end{proof}

\begin{cor}$($Bertini for normality$)$\label{cor:B-base-normal}
  In the situation of \notbref{notat:BT-13}, assume further that $\sX$ is normal.
  Then there exists an integer $d_0 \gg 0$ 
such that for all $d \ge d_0$, we can find infinitely many hypersurfaces 
$H \subset \P^n_S$ of degree $d$ for which $\sX \cap H$ is normal.
\end{cor}
\begin{proof}
Apply \thmref{thm:B-Bertini-red} (7) with $(a,b) = (1,2)$.
\end{proof}

\begin{cor}$($Bertini for irreducibility$)$\label{cor:B-base-normal-irr}
  In the situation of \notbref{notat:BT-13}, assume further that $\sX$ is irreducible such
  that $\dim(\sX_x) \ge 2$ for $x \in \{\eta, s\}$.
  Then there exists an integer $d_0 \gg 0$ 
such that for all $d \ge d_0$, we can find infinitely many hypersurfaces 
$H \subset \P^n_S$ of degree $d$ for which $\sX \cap H$ is irreducible.
\end{cor}
\begin{proof}
  We let $H \in \P_A(S'_{d})$ be such that
$\sY := \sX \cap H$ satisfies \thmref{thm:B-Bertini-red} (8).
Then $\sY_\eta$ is irreducible and dense in $\sY$. But this implies that $\sY$ is irreducible.
We note here that the proof of \thmref{thm:B-Bertini-red} (8) does not require
$\sX$ to be generically reduced.
\end{proof}

\begin{cor}$($Bertini for integrality$)$\label{cor:B-base-normal-int}
  In the situation of \notbref{notat:BT-13}, assume further that $\sX$ is integral such that
  $\dim(\sX_x) \ge 2$ for $x \in \{\eta, s\}$.
Then there exists an integer $d_0 \gg 0$ 
such that for all $d \ge d_0$, we can find infinitely many hypersurfaces 
$H \subset \P^n_S$ of degree $d$ for which $\sX \cap H$ is integral.
\end{cor}
\begin{proof}
Combine Corollaries~\ref{cor:B-base-red}
and ~\ref{cor:B-base-normal-irr}.
\end{proof}

\bigskip

\noindent\emph{Acknowledgments.}
The authors would like to thank Qing Liu for reading an earlier version of this
manuscript and sending some very useful comments. They would like to thank the referee
for reading the manuscript very carefully and suggesting many improvements in its
presentation.

\end{document}